\crefname{hypothesis}{Hypothesis}{Hypotheses}
\title{A Unified Equilibrated Flux Recovery Framework with Robust A Posteriori Error Estimation
}
\author{Cuiyu He\thanks{Department of Mathematics, University of Georgia, Athens, GA
  (\email{cuiyu.he@uga.edu}).}}
\renewcommand{\theequation}{\thesection.\arabic{equation}}
\def\@eqnnum{{\reset@font\rm (\theequation)}}
\def\abstract{
\advance \rightskip by 10mm
\advance \leftskip by 10mm
\vspace{-0.8em}
\noindent
\small{\bf Abstract.}
}
\def\XXint#1#2#3{{\setbox0=\hbox{$#1{#2#3}{\int}$}
\vcenter{\hbox{$#2#3$}}\kern-.5\wd0}}
\def\a{\alpha}
\renewcommand\o{\omega}\renewcommand\O{\Omega}
\def\S{\Sigma}
\newcommand{\bsigma}{\mbox{\boldmath$\sigma$}}
\newcommand{\btau}{\mbox{\boldmath$\tau$}}
\newcommand{\bftau}{\boldsymbol {\tau}}
\def\bn{{\bf n}}
\newcommand{\bx}{\mbox{\boldmath$x$}}
\def\cE{{\cal E}}
\def\cT{{\cal T}}
\def\sD{{_D}}
\def\f12{\frac12}
\def\dfrac{\displaystyle\frac}
\def\p{\partial}
\newcommand{\gradt}{\nabla\cdot}
\def\osc{{\rm osc \, }}
\def\divvr{{\rm div}}
\def\curll{{\rm curl}}
\newcommand{\tri}{|\!|\!|}
\newcommand{\bdm}{\begin{displaymath}}
\newcommand{\edm}{\end{displaymath}}
\newcommand{\beq}{\begin{equation}}
\newcommand{\eeq}{\end{equation}}
\newcommand{\beqa}{\begin{eqnarray}}
\newcommand{\eeqa}{\end{eqnarray}}
\newcommand{\beqas}{\begin{eqnarray*}}
\newcommand{\eeqas}{\end{eqnarray*}}
\def\cN{{\cal N}}
\newtheorem{example}{Example}[section]
\newcommand{\jump}[1]{[\![ #1]\!]}
\begin{document}

\maketitle


\begin{abstract}
We introduce the Equilibrated Averaging Residual Method (EARM), a unified
equilibrated flux-recovery framework for elliptic interface problems that
applies to a broad class of finite element discretizations. The method is
applicable in both two and three dimensions and for arbitrary polynomial
orders, and it enables the construction of computationally efficient
recovered fluxes.
We develop EARM for both discontinuous Galerkin (DG) and conforming finite
element discretizations. For DG methods, EARM can be applied directly and
yields an explicit recovered flux that coincides with state-of-the-art
conservative flux reconstructions.

For conforming discretizations, we further propose the Orthogonal
Null-space--Eliminated EARM (ON-EARM), which ensures uniqueness by
restricting the correction flux to the orthogonal complement of the
divergence-free null space. We prove local conservation and establish a
robust a~posteriori error estimator for the recovered flux in two
dimensions, with robustness measured with respect to jumps in the diffusion
coefficient. Numerical results in two and three dimensions confirm the
theoretical findings.

\end{abstract}

\begin{keywords}
EARM, ON-EARM, 
 Adaptive Mesh Refinement; Equilibrated Flux Recovery; a posteriori Error Estimation.
\end{keywords}

\begin{AMS}
65N30 65N50 
\end{AMS}


\section{Introduction}\label{intro}


An accurate and locally conservative recovered flux (also referred to as an
equilibrated flux) for finite element (FE) solutions plays a central role in
many applications, including a posteriori error estimation
\cite{Ai:07b,braess2008equilibrated,Ve:09,BFH:14,ErnVo2015,CaCaZh:20,cai2021generalized},
a key component of adaptive mesh refinement, compatible transport in
heterogeneous media \cite{odsaeter2017postprocessing}, and velocity
reconstruction in porous media
\cite{ern2009accurate,vohralik2013posteriori,bastian2014fully,capatina2016nitsche},
among others.

In this work, we focus on equilibrated flux recovery and its associated
a~posteriori error estimation for elliptic interface problems, allowing for
large jumps in the diffusion coefficient. Equilibrated estimators have
attracted significant attention due to their guaranteed reliability for the
conforming error of the finite element solution: specifically, the
reliability constant of an equilibrated a~posteriori error estimator equals
one for the conforming error. This property makes such estimators
particularly well-suited for discretization error control on both coarse and
fine meshes.

The mathematical foundation of equilibrated a~posteriori error estimation
for conforming finite element approximations is the Prager--Synge identity
\cite{prager1947approximations}, which is valid for \(H^{1}(\Omega)\)-conforming
approximations. A generalized Prager--Synge identity for piecewise
\(H^{1}(\Omega)\) (possibly discontinuous) finite element approximations in
both two and three dimensions was established in \cite{cai2021generalized}.
There, the error is decomposed into conforming and nonconforming components,
and the conforming part is guaranteed to be bounded by an equilibrated error
estimator, given by the energy norm of the difference between the numerical
flux and a recovered equilibrated flux.

%
In this paper, we introduce a unified equilibrated flux-recovery framework
for a broad class of finite element methods. The method applies in both two
and three dimensions and for arbitrary polynomial orders. Due to space
limitations, we restrict the development to conforming and discontinuous
Galerkin (DG) methods in two and three dimensions. In the DG case, the recovered flux coincides with
state-of-the-art locally conservative flux reconstructions.

In EARM, the recovered flux is constructed as the sum of two components: an
averaged numerical flux obtained explicitly from the discrete finite element
solution, and a correction flux. The correction flux is computed by solving
a variational problem whose right-hand side is given by a residual operator
associated with the averaged numerical flux. For this reason, we call the
approach the \emph{Equilibrated Averaging Residual Method} (EARM), to
distinguish it from the classical equilibrated residual method of
\cite{ainsworth2000posteriori}.

It is important to note that the variational problem underlying EARM for solving the correction flux admits infinitely many solutions, since its null space is precisely the space of divergence-free fluxes. Our goal is to identify and solve one flux solution that is equilibrated and sufficiently accurate to guarantee the robust local efficiency in the a posteriori error analysis, while also being computationally efficient to compute.

Discontinuous Galerkin (DG) methods are well known for inherently ensuring local equilibria. We refer to \cite{Ai:07b, ern2007accurate} and references therein for earlier works on explicit conservative flux reconstructions of linear and higher-order DG elements in elliptic problems. Notably, one obvious solution to the EARM for DG solutions naturally coincides with the results in \cite{Ai:07b} and \cite{ern2007accurate} for linear and arbitrary-order DG finite element solutions.

Many researchers have studied conservative flux recovery methods for
conforming finite element methods (FEM); see, for example,
\cite{ladeveze1983error,demkowicz1987adaptive,oden1989toward,
destuynder1999explicit,ainsworth2000posteriori,AiOd:93,larson2004conservative,
vejchodsky2006guaranteed,braess2007finite,braess2008equilibrated,BrPiSc:09,
Ve:09,CaZh:11,becker2016local,odsaeter2017postprocessing,CaCaZh:20,ErnVo2015}.
Partition of unity (POU) techniques are commonly employed for localization,
particularly in the absence of explicit construction techniques.

Using a POU argument, Ladev\`eze and Leguillon \cite{ladeveze1983error}
initiated a local procedure that reduces the construction of an equilibrated
flux to vertex patch--based local problems. For continuous linear finite
element approximations of the Poisson equation in two dimensions, an
equilibrated flux in the lowest-order Raviart--Thomas space was explicitly
constructed in \cite{braess2007finite,braess2008equilibrated}. Without
introducing a constraint minimization step (see \cite{CaZh:11}), however,
this explicit construction does not yield a robust a~posteriori error
estimator with respect to jumps in the diffusion coefficient. The required
constraint minimization problem on each vertex patch can be addressed by
first computing an equilibrated flux and then applying a divergence-free
correction. For recent developments along these lines, we refer to
\cite{CaCaZh:20} and the references therein.

In \cite{ErnVo2015}, a unified approach, also based on POU techniques, was
developed. This method requires solving local mixed problems on vertex
patches associated with each mesh vertex. In contrast, the approaches in
\cite{larson2004conservative,odsaeter2017postprocessing} compute a
conservative flux by solving a global problem posed on an enriched
piecewise-constant DG space.

The equilibrated residual method of \cite{ainsworth2000posteriori}
(Chapter~6.4) also constructs a conservative flux for conforming FEM via a
partition-of-unity (POU) localization. Rather than solving directly for the
flux, the method first computes its moments, which naturally localizes the
construction to star-patch regions. However, the local star-patch problem
associated with each interior vertex (and with certain boundary vertices)
has a one-dimensional kernel. To remove this indeterminacy, an additional
constraint is imposed, and the resulting constrained local problem is
typically solved via a Lagrange-multiplier formulation. Finally, the global
flux is assembled from the computed moments.

In principle, POU localization can be applied uniformly to a wide range of
finite element methods, since it does not rely on any special structure of
the underlying basis functions. In practice, however, it is mainly used for
continuous Galerkin (CG) discretizations, because simpler recovery
procedures are available for discontinuous Galerkin (DG) methods.
Accordingly, POU-based constructions are often relatively involved, as they
require the solution of star-patch local problems that are either
constrained \cite{CaZh:11} or posed in a mixed formulation \cite{ErnVo2015}.

For localization approaches beyond POU-based methods, we refer to
\cite{becker2016local}, which studies two-dimensional Poisson problems.
There, a unified mixed formulation is proposed for continuous,
nonconforming, and discontinuous Galerkin methods in two dimensions. By
replacing the exact facet integrals with an inexact Gauss--Lobatto
quadrature, the construction can be localized, yielding conservative fluxes
for a range of FEM discretizations.
Extending this approach to three dimensions in the conforming case is,
however, not straightforward: the constraints imposed on the skeleton space
in two dimensions do not admit an obvious analogue in three dimensions, and
an appropriate Gauss--Lobatto-type quadrature rule is not available on
tetrahedral meshes.



In this paper, we propose the Orthogonal Null-space--Eliminated Equilibrated
Averaging Residual Method (ON-EARM), for which no partition-of-unity (POU)
localization is required. To ensure uniqueness, we eliminate the null space
by restricting the correction flux to the orthogonal complement of the
divergence-free null space. We show that the space generated by facet jumps
of DG functions is contained in this orthogonal complement, which yields a
well-posed variational formulation with a unique solution.

The resulting space-restricted variational problem leads to a global linear
system whose unknowns are restricted to facet degrees of freedom;
consequently, its dimension is substantially smaller than that of the linear
system arising from the original PDE discretization. In the lowest-order
(\(0\)th-order) DG case, the resulting flux coincides with the constructions
in \cite{larson2004conservative,odsaeter2017postprocessing}.

We note that, for linear and higher-order elements in two dimensions, the
global variational formulation can be localized following
\cite{becker2016local} by replacing the exact facet integrals with an
inexact Gauss--Lobatto quadrature. In this case, the resulting recovered
flux coincides with that of \cite{becker2016local} and can be computed fully
explicitly, as demonstrated in \cite{capatina2024robust}.

 Unlike in two dimensions, where Gauss--Lobatto quadrature yields a fully
localized construction, no analogous Gauss--Lobatto-type rule is available
on tetrahedral meshes in three dimensions. While tetrahedral quadrature
rules with vertex evaluations exist, their low polynomial exactness makes
them unsuitable for comparable localization and accuracy. This limitation is
specific to tetrahedra: on hexahedral meshes, tensor-product Gauss--Lobatto
rules retain vertex evaluations and high exactness, enabling full
localization. We do not analyze these localized three-dimensional variants
here; the two-dimensional case is covered in \cite{becker2016local}, and a
corresponding hexahedral analysis can likely be developed along similar
lines.

Equilibrated a~posteriori error estimators are distinguished by the fact
that their reliability constant is exactly equal to one for the conforming
error. In this work, we preserve this property within the proposed
EARM/ON-EARM framework. Moreover, in two dimensions we establish a robust
efficiency bound for the error indicators associated with ON-EARM, where
robustness means independence with respect to jumps in the diffusion
coefficient. Together, these results provide a fully reliable and robust
a~posteriori error estimation framework suitable for adaptive finite element
methods applied to elliptic interface problems.

Numerically, we test the method on a collection of singular benchmark
problems that are representative of interface effects and geometric
singularities arising in adaptive mesh refinement procedures.

In two dimensions, we consider the Kellogg interface problem and
the L-shaped domain problem, and report results for the polynomial
degrees $k=1,2,3$.
In three dimensions, we study an L-shaped domain with a vertex
singularity, again for $k=1,2,3$.
All these examples involve strong solution singularities and are
chosen to assess the robustness of the method under challenging
conditions.
What we observe is {optimal}, and in some cases even
super-convergence behavior, across all test cases.

The remainder of this paper is organized as follows.
In \cref{sec:2}, we introduce the model problem and the notations.
In \cref{sec:3} and \cref{sec:4}, we apply EARM to discontinuous
Galerkin and conforming discretizations, respectively, and show
that the resulting reconstructions recover several existing
state-of-the-art locally conservative fluxes.
In \cref{sec:5}, we establish the reliability of the associated
a~posteriori error estimator, while \cref{sec:6} is devoted to the
efficiency analysis, including robustness with respect to coefficient
jumps.
Finally, numerical results are reported in \cref{sec:7}.

\section{Model problem}\label{sec:2}
\setcounter{equation}{0}

Let $\O$ be a bounded polygonal domain in $\mathbb{R}^d, d=2,3$, with Lipschitz 
boundary $\partial \O = \overline\Gamma_D \cup \overline \Gamma_N$, where
$ \Gamma_D \cap \Gamma_N = \emptyset$.
For simplicity, assume that
$\mbox{meas}_{d-1}(\Gamma_D) \neq 0$.
Considering the diffusion problem:
\begin{equation}\label{pde}
	-\gradt (A \nabla u)  =  f   \quad\mbox{in} \quad  \O,
\end{equation} 
with boundary conditions 
\[
	u = 0 \; \mbox{ on }  \Gamma_D \quad \mbox{and} \quad
	-A \nabla u \cdot \bn=g \;\mbox{ on } 
	\Gamma_N,
\]
where $\nabla \cdot$ and $\nabla$ are the respective divergence and gradient operators; $\bn$ is the outward unit 
vector normal to the boundary; $f \in H^{-1}(\O)$ and $g\in H^{-1/2}(\Gamma_N)$ are given scalar-valued functions; and the diffusion coefficient $A(x)$ is symmetric, positive definite, piecewise constant full tensor. 


 In this paper, we use the standard notations and definitions for the Sobolev spaces. Let
\[
	H_D^1(\O) =\left\{v \in H^1(\O) \,:\,
	v=0 \mbox{ on } \Gamma_D \right\}.
\]
Then the corresponding variational problem of (\ref{pde}) is to  find $u \in H^1_D(\O)$ such that 
\begin{equation} \label{vp}
	a(u,\,v):= (A\nabla u, \nabla v) = (f, v)_{\O}- \left<g, v\right>_{\Gamma_N},
	\quad \forall  \;v\in H_D^1(\O),
\end{equation}
where $(\cdot, \cdot)_{\omega}$ is the $L^2$ inner product on the domain $\o$. 
The subscript $\omega$ is omitted from here to thereafter when $\o=\O$. 

\subsection{Notations}
 Let $\cT_h=\{K\}$ be a finite element partition of $\O$ that is regular, and denote 
 by $h_K$ the diameter of the element $K$. 
 Denote the set of all facets of the triangulation $\cT_h$ by
  \[
 	\cE := \cE_I \cup \cE_D \cup \cE_N,
 \]
 where $\cE_I$ is the set of interior element facets, and $\cE_D$ and $\cE_N$ are the sets of 
boundary facets belonging to the respective $\Gamma_D$ and $\Gamma_N$.
  For each $F \in \cE$, denote by $h_F$ the length of $F$ and by
 $\bn_F$ a unit vector normal to $F$.
 Let $K_F^+$ and $K_F^-$ be the two elements sharing the common facet $F \in \cE_I$ 
 such that the unit outward normal of $K_F^-$ coincides with $\bn_F$. When $F \in \cE_D \cup \cE_N $,
 $\bn_F$ is the unit outward normal to $\partial \O$ and denote by $K_F^-$ the element having the facet $F$.
 Note here that the term facet refers to the $d-1$ dimensional entity of the mesh. In 2D, a facet is equivalent to an edge, and in 3D, it is equivalent to a face. We also denote by $\cN$ the set of all vertices and by $\cN_{I}\subset \cN$ the set of interior vertices.
 
For each $K \in \cT_h (F \in \cE)$, let $\mathbb{P}_k(K) (\mathbb{P}_k(F))$ denote the space of polynomials of degree at most $k$ on $K (F)$. 
For each $K \in \cT_{h}$, we define a sign function $\mbox{sign}_{K}(F)$ on $\cE_{K} := \{F:  F \in \cE,
\mbox{ and } F \subset \partial K\}$:
\[
\mbox{sign}_{K}(F) = 
\begin{cases}
1 & \mbox{ if } \bn_{F} = \bn_{K}|_{F},\\
-1 & \mbox{ if } \bn_{F} =-\bn_{K}|_{F}.
\end{cases}
\]

\section{The EARM and Its Application to DG FEM}\label{sec:3}
In this section, we introduce the EARM and apply it to the DG finite element method.
The resulting equilibrated fluxes for DG solutions recover existing state-of-the-art constructions
in certain special cases, while employing modified weights to ensure robustness.

Define
\[
DG(\cT_h,k) = \{v\in L^2(\O)\,:\,v|_K\in \mathbb{P}_k(K),\quad\forall\,\,K\in\cT_h\}.
\]
Denote the $H(\mbox{div}; \O)$ conforming Raviart-Thomas (RT)  space of order $k$ with respect to $\cT_h$
by
\[
RT(\cT_h,k) = \left\{ \btau \in H(\mbox{div};\O) \,: \, \btau|_K \in RT(K,k),  \;\forall\, K\in\cT_h \right\},
\]
where $RT(K,k) = \mathbb{P}_{k}(K)^d + \bx \, \mathbb{P}_{k}(K) $.
Also let
{{
\[
RT_f(\cT_h,k) = \left\{ \btau \in RT(\cT_h,k): \gradt \btau =f_{k} \,\mbox{in} \, \O \mbox{ and} \, \btau \cdot \bn_F=
  g_{k,F} \, \mbox{on} \,F \in \cE_N \right\},
\]}}
where $f_k$ is the $L^2$ projection of $f$ onto $DG(\cT_h,k)$ and $g_{k,F}$ is the $L^2$ projection of $g|_F$ onto $\mathbb{P}_k(F)$.

  For each $F \in \cE_I$, we define the following weights:
  $\omega_F^\pm = \dfrac{ \a_{F}^\mp}{\a_F^-+\a_F^+}$ 
where $\a_F^\pm = \lambda(A|_{K_F^\pm})$ and  $\lambda(M)$ is the maximal eigenvalue of the matrix $M$. We also define  the following weighted average and jump operators:
\[
	\{v\}_w^F = \begin{cases} w_F^+ v_F^+ + w_F^- v_F^-, & F \in \cE_I,\\
	v ,& F \in \cE_D \cup \cE_N,
	\end{cases}
	\{v\}_F^w = \begin{cases} w_F^- v_F^+ + w_F^+ v_F^-, & F \in \cE_I,\\
	0,& F \in \cE_D \cup \cE_N,
	\end{cases}
 \]
 \[
		\jump{v}|_F = \left\{
	\begin{array}{ll}
		v|_{F}^- - v|_{F}^+,&\forall \,F \in \cE_I,\\[2mm]
		v|_F^-, & \forall \, F \in \cE_D \cup \cE_N.
	\end{array}
	\right.
 \]
The weighted average defined above is necessary to ensure the robustness of the error estimation with respect to the jump of $A$, see \cite{cai2017residual}. 
It is easy to show that for any $F \in \cE_{I}$,
\begin{equation} \label{weight1}
w_F^{\pm} \sqrt{\a_F^{\pm}} \le \sqrt{\a_{F,min}}, \quad\,\,
 \frac{\omega_F^+}{\sqrt{\alpha_F^-}} \le \sqrt{\frac{1}{\alpha_{F,max}}},
	\,\quad \mbox{and} \quad\,
	\frac{\omega_F^-}{\sqrt{\alpha_F^+}} \le \sqrt{\frac{1}{\alpha_{F,max}}},
\end{equation}
where $\a_{F,min} = \min(\a_{F}^{+}, \a_{F}^{-})$ and $\a_{F,max} = \max(\a_{F}^{+}, \a_{F}^{-})$.

We will also use the following commonly used identity:
 \begin{equation}\label{jump-id}
 \jump{ u v}_F = \{v\}^w_F\, \jump{u}_F + \{u\}_w^F\, \jump{v}_F, \quad \forall F \in \cE.
 \eeq

  We first present the DG variational formulation for (\ref{pde}): find $u\in V^{1+\epsilon}(\cT_h)$ with $\epsilon >0$ and 
\[ V^{s}(\cT_h) = \{v \in L^2(\O), v|_K \in H^s(K) \; \mbox{and} \;
 \gradt A \nabla v \in L^2(K), \forall K \in \cT_h\}
 \]
 such that
 \begin{equation}\label{DGV}
 a_{dg}(u,\,v) = (f,\,v)   -
\left< g, v \right>_{\Gamma_N}, \quad\forall\,\, v \in V^{1+\epsilon}(\cT_h), 
\end{equation}
where 
\begin{equation}
	\begin{split}
a_{dg}(u,v)&=(A\nabla_h u,\nabla_h v)
 +\sum_{F\in\cE \setminus \cE_N}\int_F\gamma  \dfrac{\a_{F,min}}{h_F} \jump{u}
 \jump{v}\,ds \\
& -\sum_{F\in\cE  \setminus \cE_N}\int_F\{A\nabla
u\cdot\bn_F\}_{w}^F \jump{v}ds 
 + \delta
\sum_{F\in\cE  \setminus \cE_N}\int_F\{A\nabla
v\cdot\bn_F\}_{w}^F \jump{u}ds .
\end{split}
\end{equation}
Here,  $\nabla_h$ is the discrete gradient operator defined elementwisely, 
and $\gamma$ is a positive constant, $\delta$ is a constant that takes value $1, 0$ or $-1$. 

The DG solution is to
seek $u^{dg}_k \in DG(\cT_h,k)$ such that
{{\begin{equation}\label{problem_dg}
a_{dg}(u^{dg}_k,\, v) = (f,\,v)_\O-  \left< g, v \right>_{\Gamma_N}\quad \forall\, v\in DG(\cT_h,k).
\end{equation}}}


Note that the numerical flux, $-A \nabla u_k^{dg}$, does not necessarily belong to the space $H(\mbox{div}; \O)$. 
In the first step of EARM, we define a functional for any $u_h \in DG(\cT_h, k)$:
\[
\tilde \bsigma_s: u_h \in DG(\cT_h, k) \rightarrow  \tilde \bsigma_s(u_h) \in RT(\cT_h,s), \quad
0 \le s \le k,
\]
  such that
 {\begin{equation} \label{rt:1:a}
 \int_F \tilde \bsigma_{s}(u_h) \cdot \bn_F \phi \,ds = 
 \begin{cases}
 - \int_F\{A \nabla u_h \cdot \bn_F\}_w^F \phi \,ds& \forall F \in \cE \setminus \cE_N,\\
\int g \phi \,ds, & \forall F \in \cE_N,
\end{cases}
\quad \forall \phi \in \mathbb{P}_s(F).
 \end{equation}}
 and when $s\ge1$, additionly satisfies that
\begin{equation}\label{rt:1:dg}
 \begin{split}
 	&( \tilde\bsigma_{s}(u_h) , \bm{\psi})_K =-(A \nabla u_h , \bm{\psi} )_K, \quad\forall K \in \cT_h, \forall  \bm{\psi} \in \mathbb{P}_{s-1}(K)^d.
\end{split}
 \end{equation}
 Since the recovered flux $\tilde \bsigma_{s}(u_{h})$ uses the averaging flux on the facets, we will refer to this flux as the \textit{averaging numerical flux} of $u_{h}$. 
 
  For simplicity, we assume that $g|_{F} = g_{k-1,F}$. Since $\{A \nabla u_k^{cg} \cdot \bn_F\}_w^F  \in \mathbb{P}_{k-1}(F)$, it is easy to see that when $s \ge k-1$, there holds
  {\begin{equation} \label{rt:1:aa}
 \tilde \bsigma_{s}(u_h) \cdot \bn_F = 
 \begin{cases}
 - \{A \nabla u_h \cdot \bn_F\}_w^F& \forall F \in \cE \setminus \cE_N,\\
 g , & \forall F \in \cE_N.
\end{cases}
 \end{equation}}

 

We note that the weighted averaging flux $\tilde\bsigma_s^{dg}:=\tilde \bsigma_{s}(u_{k}^{dg}) \in H(\mbox{div};\O)$, however, is not necessarily locally conservative.

In the second step of EARM, we aim to find a correction flux $ \bsigma_s^{\Delta} \in RT(\cT_h, s)$ such that 
\begin{equation}\label{DG-final-}
\hat\bsigma_s^{dg}: = \tilde\bsigma_{s}^{dg}+  \bsigma_s^{\Delta} \in RT_f(\cT_h,s).
\end{equation}

Define the \textit{averaging residual operator} for any $u_{h}$ being a finite element solution:
\begin{equation}\label{r(v)}
r_{s}(v) = \sum_{K}r_{s,K}(v), \quad \mbox{where } r_{s,K}(v) := (f - \gradt \tilde \bsigma_s(u_{h}),v)_K.
\end{equation}
Our goal is to find  $ \bsigma_s^{\Delta} \in RT(\cT_h, s)$ for $u_h$ such that
\begin{equation}\label{-equation}
  (\gradt \bsigma_{s}^{\Delta},v) =  r_{s}(v) \quad \forall v \in DG(\cT_{h},s).
\end{equation}

We now set to find a $\bsigma_s^{\Delta} \in  RT(\cT_h, s)$ specifically for the DG finite element solution $u_{k}^{dg}$.
 By the integration by parts, the definition of $ \tilde \bsigma_s^{dg}$ and \cref{problem_dg}, for any $v \in \mathbb{P}_s(K)$,  there holds
\begin{equation}\label{2.8}
\begin{split}
&r_{s}(v) =  (f,v)_{K}  + ( \tilde \bsigma_s^{dg}, \nabla v)_{K} - < \tilde \bsigma_s^{dg} \cdot \bn_{K}, v>_{\partial K}\\
=&  (f,v)_K -(A \nabla u_k^{dg}, \nabla v)_K  +
\!\!\!\sum_{F\in\cE_K \setminus \cE_N}\!\!\!\int_F \{A\nabla u_k^{dg}\cdot\bn_F\}_{w}^F\jump{v}ds 
 -
\!\!\!\sum_{F\in\cE_K \cap \cE_N} \!\!\!\left< g,v\right>_F\\
=&
\sum_{F\in\cE_K \setminus \cE_N}\int_F\gamma  \dfrac{\a_{F,min}}{h_F} \jump{u_k^{dg}}
 \jump{v}\,ds
 + \delta
\sum_{F\in\cE_{K}  \setminus \cE_N}\int_F\{A\nabla
v\cdot\bn_F\}_{w}^F \jump{u_k^{dg}}ds.
\end{split}
\end{equation}
On the other side, from \cref{-equation} and integration by parts, we also have
{{\begin{equation}\label{2.9}
\begin{split}
r_{s}(v)= r_{s,K}(v) = (\gradt \bsigma_s^{\Delta},v)_K&= -( \bsigma_s^{\Delta}, \nabla v)_K
+\sum_{F\in\cE_K}\int_F \bsigma_s^{\Delta} \cdot \bn_F \jump{v}ds.
\end{split}
\end{equation}}}
Given \cref{2.8} with (\ref{2.9}), it is natural to match and define  $\bsigma_s^{\Delta} \in RT(\cT_{h},s)$  such that
 \begin{equation} \label{rt:-cg}
\int_F \bsigma_s^{\Delta} \cdot \bn_F \phi \,ds = 
 \begin{cases}
\displaystyle  \gamma \dfrac{\a_{F,min}}{h_F}\int_F \jump{u_k^{dg}} \phi \,ds,& \forall F \in \cE \setminus \cE_N,  \quad \forall \phi \in \mathbb{P}_k(F).\\
 0, & \forall F \in  \cE_N ,
\end{cases}
 \end{equation}
 and, when, $s \ge 1$,
  \begin{equation}\label{rt::dg-cg}
 \begin{split}
 	&( \bsigma_s^{\Delta} , \bm{\psi})_K =-
	 \delta 
\sum_{F\in\cE_K  \setminus \cE_N}\int_F\{A\bm{\psi}\cdot\bn_F\}_{w}^F \jump{u_k^{dg}}ds.
	\quad \forall  \bm{\psi} \in \mathbb{P}_{s-1}(K)^d.
\end{split}
 \end{equation}

\begin{lemma}
The recovered flux $\hat \bsigma_{s}^{dg}$ defined in \cref{DG-final-} where $\bsigma_{s}^{\Delta }$ is defined in \cref{rt:-cg}-\cref{rt::dg-cg} belongs to $RT(\cT_{h},s)$ for any $ (0 \le s \le k)$. Futhermore, it is locally conservative, satisfying $ \hat \bsigma_s^{dg} \in RT_f(\cT_h,s)$. 
 \end{lemma}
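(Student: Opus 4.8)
The plan is to prove the two assertions in turn: first that $\hat\bsigma_s^{dg}$ is $H(\mathrm{div};\O)$-conforming, hence lies in $RT(\cT_h,s)$, and then that it is equilibrated with the prescribed Neumann trace, hence lies in $RT_f(\cT_h,s)$.

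For conformity, I would note that $\tilde\bsigma_s^{dg}=\tilde\bsigma_s(u_k^{dg})$ already belongs to $RT(\cT_h,s)$ by its very construction: \cref{rt:1:a} prescribes its facet normal moments against the \emph{globally fixed} normal $\bn_F$, so these agree whether the facet is approached from $K_F^+$ or from $K_F^-$, and for $s\ge1$ \cref{rt:1:dg} prescribes the remaining interior moments; together they form a unisolvent set of degrees of freedom for $RT(K,s)$ on each $K$. The same reasoning applies verbatim to $\bsigma_s^\Delta$: on each $K$, \cref{rt:-cg} fixes its facet normal moments via $\bn_F$, so its normal component is single-valued across every $F\in\cE_I$, and, when $s\ge1$, \cref{rt::dg-cg} fixes the interior moments; hence $\bsigma_s^\Delta|_K\in RT(K,s)$ is well defined and $\bsigma_s^\Delta\in RT(\cT_h,s)$. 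Consequently $\hat\bsigma_s^{dg}=\tilde\bsigma_s^{dg}+\bsigma_s^\Delta\in RT(\cT_h,s)$.

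The core step is to verify that $\bsigma_s^\Delta$ actually solves the variational problem \cref{-equation}, i.e., $(\gradt\bsigma_s^\Delta,v)=r_s(v)$ for all $v\in DG(\cT_h,s)$. Since $DG(\cT_h,s)$ is spanned by functions supported on a single element, it suffices to fix $K\in\cT_h$ and $v\in\mathbb{P}_s(K)$ extended by zero, so that $r_s(v)=r_{s,K}(v)$. I would then integrate $(\gradt\bsigma_s^\Delta,v)_K$ by parts over $K$ (legitimate because $\bsigma_s^\Delta|_K\in H(\mathrm{div};K)$) and use $\bn_K|_F=\mathrm{sign}_K(F)\,\bn_F$ together with $\jump{v}|_F=\mathrm{sign}_K(F)\,v|_F$ (valid because $v$ vanishes outside $K$) to bring it to the form $-(\bsigma_s^\Delta,\nabla v)_K+\sum_{F\in\cE_K}\int_F\bsigma_s^\Delta\cdot\bn_F\,\jump{v}\,ds$, which is precisely the right-hand side of \cref{2.9}. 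Since $\nabla v\in\mathbb{P}_{s-1}(K)^d$, the volume term is evaluated by \cref{rt::dg-cg} and is absent when $s=0$; since $\jump{v}|_F\in\mathbb{P}_s(F)\subset\mathbb{P}_k(F)$, each facet term is evaluated by \cref{rt:-cg}. The outcome is exactly the right-hand side of \cref{2.8}, which the excerpt already identifies with $r_{s,K}(v)$, so \cref{-equation} holds.

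Granting \cref{-equation}, equilibration is immediate: testing with $v\in\mathbb{P}_s(K)$ gives $(\gradt\bsigma_s^\Delta,v)_K=(f-\gradt\tilde\bsigma_s^{dg},v)_K$, hence $(\gradt\hat\bsigma_s^{dg}-f,v)_K=0$ for all such $v$; since $\gradt\hat\bsigma_s^{dg}|_K\in\mathbb{P}_s(K)$, this forces $\gradt\hat\bsigma_s^{dg}=f_s$ on each $K$. For the Neumann trace, \cref{rt:-cg} gives $\bsigma_s^\Delta\cdot\bn_F=0$ on $F\in\cE_N$, while \cref{rt:1:a} gives $\tilde\bsigma_s^{dg}\cdot\bn_F=g_{s,F}$ there (equal to $g$ once $s\ge k-1$, under the standing assumption $g|_F=g_{k-1,F}$), so $\hat\bsigma_s^{dg}\cdot\bn_F=g_{s,F}$ and therefore $\hat\bsigma_s^{dg}\in RT_f(\cT_h,s)$. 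The step needing real care is the orientation bookkeeping in this integration by parts --- tracking $\mathrm{sign}_K(F)$, the relation between $\bn_K$ and $\bn_F$, and the one-sided value of $\jump{v}$ for an element-supported test function --- together with the observation that it suffices to test against $\mathbb{P}_s(F)$ in \cref{rt:-cg}; the remaining manipulations are a routine matching with \cref{2.8}.
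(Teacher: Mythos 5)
Your proposal is correct and follows essentially the same route as the paper: integrate $(\nabla\cdot\bsigma_s^{\Delta},v)_K$ by parts, evaluate the volume and facet terms via the defining moment conditions \cref{rt::dg-cg} and \cref{rt:-cg}, and identify the result with $r_{s}(v)$ through \cref{2.8} to conclude $(\nabla\cdot\hat\bsigma_s^{dg},v)_K=(f,v)_K$. Your additional bookkeeping (unisolvence of the degrees of freedom for conformity, the $\mathrm{sign}_K(F)$ orientation details, and the explicit check of the Neumann trace) only fills in steps the paper declares immediate.
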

 \begin{proof}
First $\hat \bsigma_{s}^{dg} \in RT(\cT_{h},s)$ is immediate.
To prove that $\hat \bsigma_s^{dg} \in RT_f(\cT_h,s)$, 
 by \cref{DG-final-}, integration by parts, the definition of  $ \bsigma_s^{\Delta}$,  \cref{2.8} and \cref{r(v)}, we have for any $v \in DG(K,s)$:
	\begin{equation*}
	\begin{split}
		&( \nabla \cdot \hat\bsigma_s^{dg}, v)_{K} 
		= 
		(\nabla \cdot  \tilde\bsigma_s^{dg}, v)_{K} +
		(\nabla \cdot   \bsigma_s^{\Delta}, v)_{K} \\
		=& 
		(\nabla \cdot  \tilde\bsigma_s^{dg}, v)_{K} -
		( \bsigma_s^{\Delta},  \nabla v)_{K} +
		\sum_{F \in \cE_{K}} <  \bsigma_s^{\Delta} \cdot \bn_{F}, \jump{v}>_{F} \\
		=&
		(\nabla \cdot  \tilde\bsigma_s^{dg}, v)_{K} +
	\sum_{F\in\cE_K  \setminus \cE_N}\delta\int_F \{A\nabla v \cdot\bn_F\}_{w}^F \jump{u_{k}^{dg}}ds 
	+
		\sum_{F \in \cE_{K} \setminus \cE_{N}} \gamma \dfrac{\a_{F,min}}{h_F}\int_F  \jump{u_k^{dg}} \jump{v} \,ds  \\
		 =& (\nabla \cdot  \tilde\bsigma_s^{dg}, v)_{K} + r_{s}(v)  = 
		(\nabla \cdot  \tilde\bsigma_s^{dg}, v)_{K} + (f - \gradt \tilde \bsigma_s^{dg},v)_K
		= (f,v)_{K}.
		\end{split}
	\end{equation*}
This completes the proof of the lemma.
 \end{proof}
The recovered flux here is similar to those introduced in \cite{Ai:07b,ern2007accurate} for $s =k$ but with modified weight. 

Since \( r_{s}(v) \) in the right-hand side of \cref{r(v)} is the residual based on the weighted averaging , we refer to our method as the \textit{Equilibrated Averaging Residual Method (EARM)} to distinguish it from the classical equilibrated residual method introduced in \cite{ainsworth2000posteriori}. 


\subsection{Algorithm for the EARM}
We now outline the pseudo-algorithm for the EARM.
Define
\[
RT_f(\cT_h,s',s) = \left\{ \btau \in RT(\cT_h,s'): \Pi_{s}(\gradt \btau) =f_{s} \,\mbox{in} \, \O \mbox{ and} \, \btau \cdot \bn_F=
  g_{s,F} \, \mbox{on} \,F \in \cE_N \right\}.
\]
Here, $\Pi_s$ denotes the $L^2$-orthogonal projection onto the space
$DG(\cT_h,s)$.
We summarize the seudo-algorithm of EARM in \cref{algorithm}.


%
%
%

\begin{algorithm}
\caption{The pseudocode for EARM}\label{algorithm}
\begin{algorithmic}[1]
\State \textbf{Input:} Finite element solution $u_{h} \in DG(\cT_{h},k)$.
\State Step 1: Compute the weighted averaging flux by \cref{rt:1:a}--\cref{rt:1:dg}:
$$
\tilde{\boldsymbol{\sigma}}_{k-1}(u_{h}) \in RT(\mathcal{T}_{h},k-1)
\quad \mbox{or} \quad
\tilde{\boldsymbol{\sigma}}_{s}(u_{h}), \quad 0 \le s \le k-1.
 $$ 
\State Step 2: Solve for a correction  flux: 
$\boldsymbol{\sigma}_{s}^{\Delta} \in RT(\mathcal{T}_{h},s)$ such that   
\begin{equation}\label{flux-equation-general}
  (\gradt \bsigma_{s}^{\Delta},v)_K =  r_{s}(v) \quad \forall v \in  DG(\cT_{h},s),
\end{equation}
where $ r_{s}(v) = (f - \gradt \tilde \bsigma_s(u_{h}),v)_K$.
\State Step 3: Obtain the equilibrated flux:
\[
	\hat \bsigma_{h} = \tilde{\boldsymbol{\sigma}}_{k-1}(u_{h}) +  \bsigma_{s}^{\Delta} \in 
	RT_f(\cT_h, max(s,k-1),s), 
\]
or
\[
	\hat \bsigma_{h} = \tilde{\boldsymbol{\sigma}}_{s}(u_{h}) +  \bsigma_{s}^{\Delta} \in 
	RT_f(\cT_h, s,s). 
\]
\end{algorithmic}
\end{algorithm}



\section{ON-EARM for Conforming FEM}\label{sec:4}

Define the $k$-th order ($k \ge 1$) conforming finite element spaces by
\[
CG(\cT_h,k) = \{v\in H^1(\O)\,:\,v|_K\in \mathbb{P}_k(K),\quad\forall\,\,K\in\cT_h\}
\]
and
\[
CG_{0,\Gamma_D}(\cT_h,k) = H_{0,\Gamma_D}^1(\O) \cap CG(\cT_h,k).
\]
The conforming finite element solution of order $k$ is to find $u_k^{cg} \in CG_{0,\Gamma_D}(\cT_h,k)$ such that
{{\begin{equation} \label{CG-solution}
	(A \nabla u_k^{cg}, \nabla v)_{\O}
	=(f,v)_\O-\left<g, v\right>_{\Gamma_N}, \quad \forall \, v  \in CG_{0,\Gamma_D}(\cT_h,k).
\end{equation} }}

To compute the correction flux in \cref{flux-equation-general} for
$u_k^{cg}$, a more careful treatment is required than in the DG case. As in the conforming
setting, a locally conservative flux does not admit a straightforward
closed-form expression.

We start by first checking the compatibility of the variational problem. Note that for any $v \in CG_{0,\Gamma_D}(\cT_h,k-1)$, by the definition of $ \tilde \bsigma_{k-1}^{cg}:= \tilde \bsigma_{k-1} (u_k^{cg})$ and integration by parts, there holds
 \begin{equation}
 \begin{split}
r(v): =&(f,v) - (\gradt \tilde \bsigma_{k-1}^{cg},v) =
    (f,v)  -  (A \nabla u_k^{cg}, \nabla v) -
    < g, v>_{\Gamma_{N}}=0.
    \end{split}
 \end{equation}
 
 Specifically, for any $v \in CG_{0,\Gamma_D}(\cT_h,s), s \le k-1$ ,  and $\mbox{supp}(v) = K$ for any $K \in \cT_{h}$, we also have $r(v) =0.$
The residual problem \cref{flux-equation-general} then should satisfy
\begin{equation}\label{compatbility-interior}
0 = r(v) =(\gradt \bsigma_{s}^{\Delta},v)_K
=
 -( \bsigma_s^{\Delta}, \nabla v)_K =0.
\end{equation}

To ensure the compatibility in \cref{compatbility-interior}, it is natural to impose the following restriction for the interior degree of freedom of $\bsigma_s^{\Delta}$ by
\begin{equation}\label{cg-interior-dof}
( \bsigma_s^{\Delta},  \bm{\psi})_K =0 \quad \forall   \bm{\psi} \in \mathbb{P}_{s-1}(K)^d \quad \mbox{when } s\ge 1.
\end{equation}

Define the space for $s \ge 1$,
 \[
 \mathring{RT}(\cT_h,s) = \left\{ \btau\in RT(\cT_h,s) : ( \btau,  \bm{\psi})_K =0 \; \forall  \bm{\psi} \in \mathbb{P}_{s-1}(K)^d, K \in \cT_h, \btau \cdot \bn_F=
0,F \in \cE_{N} \right\}.
 \]
 In the case $s=0$, we let $ \mathring{RT}(\cT_h,0) =  {RT}(\cT_h,0)$.
 Then,  \cref{flux-equation-general}, becomes finding $ \bsigma_s^{\Delta} \in \mathring{RT}(\cT_{h},s)$ such that
 \begin{equation}\label{flux-cg-problem}
\begin{split}
\sum_{F\in\cE \setminus \cE_{N}}\int_F \bsigma_s^{\Delta} \cdot \bn_F \jump{v}ds =r(v)
\quad \forall v \in DG(\cT_{h},s).
\end{split}
\end{equation}
 
 We have the following compatibility result for \cref{flux-cg-problem}.
 \begin{lemma}
 Let $\mathcal{A}(\bftau_h,v) := \underset{F \in \cE \setminus \cE_{N}}{ \sum } \left<\bftau_h \cdot \bn_F, \jump{v} \right>_F$ be a bilinear form defined on $\bftau_h \in \mathring{RT}(\cT_h, s) \times DG(\cT_{h},s), \,  0 \le s \le k-1$.
There holds
 \begin{equation}\label{compatibility-cg}
\begin{split}
	&\mathcal{A}(\bftau_h,v)= r(v) = 0, \quad\forall v \in CG_{0,\Gamma_D}(\cT_h,s). \quad
\end{split}
\end{equation}
where $r(v) = (f - \nabla \cdot \tilde\bsigma_{k-1}^{cg}, v)_{\O}$.
 \end{lemma}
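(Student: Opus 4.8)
\emph{Proof plan.} The assertion consists of two independent claims for $v\in CG_{0,\Gamma_D}(\cT_h,s)$, namely $\mathcal{A}(\bftau_h,v)=0$ and $r(v)=0$; I would establish them separately, both by elementary arguments.

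For the first claim the plan is to use only the conformity of $v$. Since $v\in CG_{0,\Gamma_D}(\cT_h,s)\subset H^1(\O)$, its trace is single-valued across every interior facet, so $\jump{v}|_F=v|_F^--v|_F^+=0$ for all $F\in\cE_I$; and since $v=0$ on $\Gamma_D$, the one-sided jump satisfies $\jump{v}|_F=v|_F^-=0$ for all $F\in\cE_D$. Because the sum defining $\mathcal{A}$ runs only over $\cE\setminus\cE_N=\cE_I\cup\cE_D$, every facet term vanishes, hence $\mathcal{A}(\bftau_h,v)=0$ for every $\bftau_h\in\mathring{RT}(\cT_h,s)$. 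It is worth recording that excluding $\cE_N$ from the index set is exactly what makes this work, since a Neumann-facet contribution would not be annihilated by the conformity of $v$.

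For the second claim the plan is to reproduce, for test functions of the lower degree $s$, the computation displayed just before the lemma. First, $s\le k-1$ gives $CG_{0,\Gamma_D}(\cT_h,s)\subseteq CG_{0,\Gamma_D}(\cT_h,k)$, so $v$ is an admissible test function in \cref{CG-solution}. Since $\tilde\bsigma_{k-1}^{cg}=\tilde\bsigma_{k-1}(u_k^{cg})\in RT(\cT_h,k-1)\subset H(\mbox{div};\O)$ and $v\in H^1_{0,\Gamma_D}(\O)$, a global integration by parts gives $(\nabla\cdot\tilde\bsigma_{k-1}^{cg},v)_\O=-(\tilde\bsigma_{k-1}^{cg},\nabla v)_\O+\langle\tilde\bsigma_{k-1}^{cg}\cdot\bn,v\rangle_{\partial\O}$, with no interior-facet contribution because $\tilde\bsigma_{k-1}^{cg}\cdot\bn$ and $v$ are both single-valued there; the boundary term reduces to $\langle g,v\rangle_{\Gamma_N}$ since $v=0$ on $\Gamma_D$ and $\tilde\bsigma_{k-1}^{cg}\cdot\bn_F=g$ on $\cE_N$ by \cref{rt:1:aa}. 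Next, because $\nabla v|_K\in\mathbb{P}_{s-1}(K)^d\subseteq\mathbb{P}_{k-2}(K)^d$ whenever $s\le k-1$, the interior moment identity \cref{rt:1:dg} yields $(\tilde\bsigma_{k-1}^{cg},\nabla v)_K=-(A\nabla u_k^{cg},\nabla v)_K$ on each $K\in\cT_h$. Summing over $K$ and combining the two displays,
\[
r(v)=(f,v)_\O-(\nabla\cdot\tilde\bsigma_{k-1}^{cg},v)_\O=(f,v)_\O-(A\nabla u_k^{cg},\nabla v)_\O-\langle g,v\rangle_{\Gamma_N}=0
\]
by \cref{CG-solution}. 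The degenerate case $k=1$ (hence $s=0$) is trivial, since then $CG_{0,\Gamma_D}(\cT_h,0)=\{0\}$ and \cref{rt:1:dg} is vacuous.

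I do not expect a genuine analytical obstacle: this is a compatibility bookkeeping statement rather than a substantive estimate. The only points requiring care are (i) keeping the facet index set restricted to $\cE\setminus\cE_N$, so that the conformity and the Dirichlet condition on $v$ kill every term in $\mathcal{A}$; (ii) using $\tilde\bsigma_{k-1}^{cg}\in H(\mbox{div};\O)$, so that interior-facet boundary terms cancel in the integration by parts; and (iii) aligning polynomial degrees so that $\nabla v$ lies in the moment space of \cref{rt:1:dg}, which is precisely what the hypothesis $s\le k-1$ guarantees.
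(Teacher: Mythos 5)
Your proposal is correct and follows essentially the same route as the paper, whose proof is declared ``immediate'' on the strength of the computation displayed just before the lemma: $\mathcal{A}(\bftau_h,v)=0$ because $\jump{v}$ vanishes on $\cE_I\cup\cE_D$ for conforming $v$ vanishing on $\Gamma_D$, and $r(v)=0$ by integration by parts, the facet and moment conditions \cref{rt:1:aa}--\cref{rt:1:dg} defining $\tilde\bsigma_{k-1}^{cg}$, and Galerkin orthogonality from \cref{CG-solution}. Your explicit verification that $s\le k-1$ places $\nabla v|_K$ in the moment space $\mathbb{P}_{k-2}(K)^d$, and your handling of the degenerate case $k=1$, are welcome details the paper leaves implicit.
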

 \begin{proof}
 The proof is immediate.
 \end{proof}

 
\subsection{Null-Space Elimination for $\bsigma_{s}^{\Delta}$}
\label{subsec:restriction}
Note that \cref{flux-cg-problem} has infinitely many solutions given the kernel of divergence-free space.
In this subsection, we derive a recovery method that removes the null-space and restricts the correction flux  in the orthogonal complement of divergence-free null-space. 

Denote the divergence-free space in $\mathring{RT}(\cT_{h},s)$ as
 \[\mathring{RT}^{0}(\cT_{h},s) = \{\bftau \in \mathring{RT}(\cT_{h},s): \nabla \cdot \bftau =0  \}.\]
Also denote by $\mathring{RT}^{0}(\cT_{h},s)^{\perp}$ the orthogonal complement of the divergence free space $\mathring{RT}^{0}(\cT_{h},s)$.

%

  Now  for any $w \in DG(\cT_{h},s)$, define a mapping $\mathbf{S}:  w \in DG(\cT_{h},s) \to$
  $\mathbf{S}(w) \in \mathring{RT}(\cT_{h},s)$ such that
  \beq \label{dg-}
  \mathbf{S}(w) \cdot \bn_{F}|_{F} = A_{F} h_{F}^{-1}\jump{w}|_F \quad \forall F \in {\cE \setminus \cE_{N}},
  \eeq
  where $A_{F} = \min(\a_{F}^{+}, \a_{F}^{-})$ when $F \in \cE_{I}$ and $A_{F} =\a_{F}^{-}$ when $F \in \cE_{D}$.
\begin{lemma}\label{lem:perp-space}
The following relationship holds:
 \[
 	\{ \mathbf{S}(w): w \in DG(\cT_{h},s) \} \subset \mathring{RT}^{0}(\cT_{h},s)^{\perp}.
	\]
\end{lemma}
\begin{proof}
We first observe that for any $\bftau_h \in \mathring{RT}^{0}(\cT_{h},s)$, there holds
 \begin{equation}\label{kernel3}
0 = (\nabla  \cdot \bftau_h,  v)_{\cT_{h}}  \Leftrightarrow
 \underset{F \in \cE\setminus \cE_{N}}{ \sum } \left<\bftau_h \cdot \bn_F, \jump{v} \right>_F =\mathcal{A}(\bftau_h,v) =0 \quad \forall v \in DG(\cT_{h},s).
 \end{equation}
 Thus, $\bftau_h \in  \mathring{RT}^{0}(\cT_{h},s)^{\perp}$ if and only if
    \begin{equation}\label{sufficientCondition}
\mathcal{A}(\bftau_h,v) =0 \quad \forall v \in DG(\cT_{h},s) \Rightarrow \bftau_h \equiv 0.
 \end{equation}
Therefore, to prove that $\mathbf{S}(w) \in  \mathring{RT}^{0}(\cT_{h},s)^{\perp}$, it is sufficient to prove that $\mathbf{S}(w)$ satisfies \cref{sufficientCondition} for all $w \in DG(\cT_{h},s)$.
Assuming
   \begin{equation}
   \mathcal{A}(\mathbf{S}(w),v) =0 \quad \forall v \in DG(\cT_{h},s),
 \end{equation}
immediately yields
    \begin{equation}
   \mathcal{A}(\mathbf{S}(w),w)  =0,
 \end{equation}
i.e., $\|A_{F}^{1/2} h_{F}^{-1/2}\jump{w}\|_{\cE\setminus \cE_{N}} = 0$, hence, $\| \mathbf{S}({w}) \cdot \bn_{F} \|_{\cE\setminus \cE_{N}} \equiv 0$. This completes the proof of the lemma.
\end{proof}

Thanks to \cref{lem:perp-space}, we reformulate the problem \cref{flux-cg-problem} by restricting the flux  in the space of $\mathring{RT}^{0}(\cT_{h},s)^{\perp}$   as following:
finding $u_s^{\Delta} \in DG(\cT_h, s) $ such that
\begin{equation}\label{b-correction}
	\mathcal{A}( \jump{u_s^{\Delta}}, \jump{v})
= r(v) \quad \forall \, v \in DG(\cT_h,s),
\end{equation}
where
\[
\mathcal{A}( \jump{u_s^{\Delta}}, \jump{v}) 
:=\sum_{F \in \cE \setminus \cE_{N}} \int_{F}  A_{F}h_{F}^{-1} \jump{u_s^{\Delta}} \jump{v} \,ds.
\]

Note that only the information of $\jump{u_{s}^{\Delta}}$ on facets $\cE \setminus \cE_{N}$ is used in the formulation. We further
define for any $s \ge 1$ the quotient spaces:
\[
	 DG^{0}(\cT_h, s) = \{v \in DG(\cT_h, s): v|F=0 \, \forall F \in \cE_{N} \} / \{v \in  CG(\cT_h, s)\}.
\]
Here we have used $A/B$ to denote the quotient space of $A$ by $B$.
 \cref{b-correction} is then equivalent to finding $u_s^{\Delta} \in DG^{0}(\cT_h, s) $ such that
\begin{equation}\label{b-correction-A} 
	\mathcal{A}( \jump{u_s^{\Delta}}, \jump{v})_{F}
= r(v) \quad \forall \, v \in DG^{0}(\cT_h,s).
\end{equation}

One feature of \cref{b-correction-A} is the well-posedness thanks to the same trial and test spaces along with continuity and coercivity in the space of $DG^{0}(\cT_h,s)$.

\begin{lemma}
	\cref{b-correction-A} has a unique solution $u_s^{\Delta} \in DG^{0}(\cT_h, s)$ for all integers $0 \le s \le k-1$.
\end{lemma}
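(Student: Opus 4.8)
The plan is to view \cref{b-correction-A} as a symmetric coercive variational problem with coinciding trial and test spaces on the \emph{finite-dimensional} quotient space $DG^0(\cT_h,s)$, and to conclude by the Lax--Milgram lemma (equivalently, since $\mathcal A(\jump{\cdot},\jump{\cdot})$ is symmetric, by the Riesz representation theorem once it is shown to be an inner product on $DG^0(\cT_h,s)$). Three ingredients have to be checked: that $r(\cdot)$ induces a well-defined linear functional on $DG^0(\cT_h,s)$; that $\mathcal A(\jump{\cdot},\jump{\cdot})$ is bounded; and that it is coercive on $DG^0(\cT_h,s)$. In a finite-dimensional setting the boundedness of $r(\cdot)$ and of $\mathcal A(\jump{\cdot},\jump{\cdot})$ are not an issue, so the substance lies in the well-definedness of the right-hand side and in coercivity.

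For the right-hand side, I would use the compatibility identity \cref{compatibility-cg}: the functions quotiented out in the definition of $DG^0(\cT_h,s)$ are conforming (and vanish on the $\Gamma_N$ facets by the numerator constraint, and on $\Gamma_D$, as the coercivity argument below shows), and on such functions \cref{compatibility-cg} — which itself follows from the conforming equation \cref{CG-solution} together with the defining relations \cref{rt:1:a}--\cref{rt:1:dg} for $\tilde\bsigma_{k-1}^{cg}$ — gives $r(v)=0$. Hence $v\mapsto r(v)$ does not depend on the choice of representative and defines a linear form on $DG^0(\cT_h,s)$. Boundedness of $\mathcal A(\jump{\cdot},\jump{\cdot})$ is immediate from a facetwise Cauchy--Schwarz inequality over $\cE\setminus\cE_N$.

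The main obstacle is coercivity, i.e. showing that the jump seminorm $|v|^2:=\mathcal A(\jump v,\jump v)=\sum_{F\in\cE\setminus\cE_N}A_F h_F^{-1}\|\jump v\|_{0,F}^2$ is in fact a \emph{norm} on $DG^0(\cT_h,s)$; once this is known, equivalence of norms on the finite-dimensional space yields a (mesh- and coefficient-dependent) coercivity constant. To prove definiteness, suppose $|v|=0$ for a representative $v$ lying in the numerator of $DG^0(\cT_h,s)$. Then $\jump v|_F=0$ on every interior facet, so $v$ is continuous across all of $\cE_I$, hence $v\in H^1(\Omega)$ and $v\in CG(\cT_h,s)$; moreover $\jump v|_F=v|_F^-=0$ on every $F\in\cE_D$, so $v$ vanishes on $\Gamma_D$, and it already vanishes on $\Gamma_N$ by the numerator constraint. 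Thus $v$ lies in the conforming subspace modded out in $DG^0(\cT_h,s)$, i.e. it represents the zero class, which is what definiteness requires. The case $s=0$ is the same, with the simplification that $CG(\cT_h,0)$ consists of the global constants and a constant vanishing on a $\Gamma_D$ facet is zero because $\mathrm{meas}_{d-1}(\Gamma_D)\neq0$. Collecting these facts, the Lax--Milgram lemma delivers the unique $u_s^\Delta\in DG^0(\cT_h,s)$. The only point demanding care is the consistent bookkeeping of boundary traces — matching the $\Gamma_N$-vanishing built into $DG^0(\cT_h,s)$, the $\Gamma_D$-vanishing produced by the kernel computation, and the hypothesis under which \cref{compatibility-cg} applies — after which the remaining steps are routine.
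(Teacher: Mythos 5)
Your proof is correct and takes essentially the same route as the paper, whose own proof simply asserts continuity and coercivity of $\mathcal{A}$ on $DG^{0}(\cT_h,s)$ in the jump norm and invokes Lax--Milgram. You supply the details the paper leaves implicit (definiteness of the jump seminorm on the quotient and well-definedness of $r(\cdot)$ via the compatibility identity), but the underlying argument is the same.
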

\begin{proof}
We first have that the bilinear form $	\mathcal{A}( \jump{u_s^{\Delta}}, \jump{v})_{F}$ is continuous and coercive in the space of $DG^{0}(\cT_h, s)$ under the norm:
\beq \label{tri-norm}
\tri v \tri =  \sqrt{\sum_{F \in \cE\setminus \cE_{N}} h_{F}^{-1}\| A_{F}^{1/2}\jump{v}\|_{F}^{2}}.
\eeq Then, by the Lax-Milgram theorem, \cref{b-correction-A}  has a unique solution for all $0 \le s \le k-1$. 
\end{proof}
\begin{remark}
When $s=0,$ our method coincides with the equilibrated flux recovered in \cite{odsaeter2017postprocessing}.
	Though the solution of \cref{b-correction-A} is unique, it renders a global problem. However, the number of degrees of freedom for the global problem is much less than the original problem, as it includes only unknowns restricted to facet degrees of freedom.
	
Moreover, in two dimensions, for \(1 \leq s \leq k-1\), we can localize it using the same techniques as in \cite{becker2016local} by replacing the exact integrals, \(\int_{F} A_{F}h_{F}^{-1} \jump{u_s^{\Delta}} \jump{v} \,ds\), with an inexact Gauss-Lobatto quadrature. 
With Gauss-Lobatto quadrature in two dimensions, our recovered flux coincides with that in \cite{becker2016local}. Furthermore, the recovered flux can be computed entirely explicitly, as shown in \cite{capatina2024robust}.

\end{remark}

\begin{lemma}\label{lem:sigma-hat-1}
Define for some $0 \le s \le k-1$
\begin{equation}\label{recover--cg}
	\hat \bsigma_h^{cg} = \tilde \bsigma_{k-1}^{cg} + \bsigma_s^{\Delta},
\end{equation}
where $\bsigma_s^{\Delta} =\mathbf{S}({u_{s}^{\Delta}})$ in which $u_{s}^{\Delta}$ is the solution to \cref{b-correction-A}.
Then $\hat \bsigma_h^{cg}  \in RT(\cT_{h}, k-1, s)$ satisfies
	\begin{equation}\label{cg-conservation-1}
		(\nabla \cdot \hat \bsigma_h^{cg}, v) = (f,v) \quad \forall v \in DG(\cT_{h},s).
	\end{equation}
\end{lemma}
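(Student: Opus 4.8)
The plan is to mimic the proof of the DG lemma above: split $\nabla\cdot\hat\bsigma_h^{cg}$ into the divergences of the averaging flux $\tilde\bsigma_{k-1}^{cg}$ and of the correction $\bsigma_s^{\Delta}$, integrate the latter by parts elementwise, and then use the defining relation \cref{dg-} of $\mathbf{S}$ together with \cref{b-correction} to see that $\bsigma_s^{\Delta}$ reproduces exactly the residual $r(\cdot)$ on $DG(\cT_h,s)$.

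First I would record the conformity and polynomial degree. Since $\tilde\bsigma_{k-1}^{cg}=\tilde\bsigma_{k-1}(u_k^{cg})\in RT(\cT_h,k-1)$ by \cref{rt:1:a}--\cref{rt:1:dg}, and $\bsigma_s^{\Delta}=\mathbf{S}(u_s^{\Delta})\in\mathring{RT}(\cT_h,s)\subset RT(\cT_h,s)\subset RT(\cT_h,k-1)$ (using $s\le k-1$), the sum $\hat\bsigma_h^{cg}$ lies in $RT(\cT_h,k-1)$ and is in particular $H(\mbox{div};\O)$-conforming. On $F\in\cE_N$ the correction has zero normal trace by the definition of $\mathring{RT}(\cT_h,s)$, so $\hat\bsigma_h^{cg}$ inherits the prescribed Neumann normal trace from $\tilde\bsigma_{k-1}^{cg}$. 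The remaining (divergence) constraint defining $RT_f(\cT_h,k-1,s)$ follows from the identity \cref{cg-conservation-1} proved below: indeed $\nabla\cdot\hat\bsigma_h^{cg}\in DG(\cT_h,k-1)$ and, testing against $v\in DG(\cT_h,s)$, $(\Pi_s(\nabla\cdot\hat\bsigma_h^{cg}),v)=(\nabla\cdot\hat\bsigma_h^{cg},v)=(f,v)=(f_s,v)$, hence $\Pi_s(\nabla\cdot\hat\bsigma_h^{cg})=f_s$.

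It remains to prove \cref{cg-conservation-1}. Fix $v\in DG(\cT_h,s)$. On each $K\in\cT_h$, integration by parts gives $(\nabla\cdot\bsigma_s^{\Delta},v)_K=-(\bsigma_s^{\Delta},\nabla v)_K+\langle\bsigma_s^{\Delta}\cdot\bn_K,v\rangle_{\partial K}$, and the volume term vanishes because $\nabla v\in\mathbb{P}_{s-1}(K)^d$ while $\bsigma_s^{\Delta}\in\mathring{RT}(\cT_h,s)$ is $L^2$-orthogonal to $\mathbb{P}_{s-1}(K)^d$ (the term is trivially zero when $s=0$). Summing over $K$ and reassembling the boundary integrals — using that $\bsigma_s^{\Delta}\cdot\bn_F$ is single-valued on interior facets since $\bsigma_s^{\Delta}\in H(\mbox{div};\O)$, that $\bsigma_s^{\Delta}\cdot\bn_F=0$ on $\cE_N$, and that $\jump{v}|_F=v|_F^-$ on $\cE_D$ — one obtains $\sum_K(\nabla\cdot\bsigma_s^{\Delta},v)_K=\sum_{F\in\cE\setminus\cE_N}\langle\bsigma_s^{\Delta}\cdot\bn_F,\jump{v}\rangle_F=\mathcal{A}(\bsigma_s^{\Delta},v)$. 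By \cref{dg-}, $\bsigma_s^{\Delta}\cdot\bn_F=A_Fh_F^{-1}\jump{u_s^{\Delta}}$ on $\cE\setminus\cE_N$, so this equals $\mathcal{A}(\jump{u_s^{\Delta}},\jump{v})=r(v)=(f-\nabla\cdot\tilde\bsigma_{k-1}^{cg},v)$ by \cref{b-correction}. Adding $(\nabla\cdot\tilde\bsigma_{k-1}^{cg},v)$ to both sides yields $(\nabla\cdot\hat\bsigma_h^{cg},v)=(f,v)$ for all $v\in DG(\cT_h,s)$, which is \cref{cg-conservation-1}.

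The only points that need genuine care are bookkeeping ones: the reassembly of the per-element boundary integrals into the skeleton form $\mathcal{A}(\cdot,\cdot)$ (which rests on $H(\mbox{div})$-conformity on $\cE_I$, the zero Neumann normal trace built into $\mathring{RT}(\cT_h,s)$, and the boundary convention for $\jump{\cdot}$ on $\cE_D$), and the observation that $\mathbf{S}(u_s^{\Delta})$ is independent of the chosen representative of $u_s^{\Delta}\in DG^0(\cT_h,s)$ — this holds because $\mathbf{S}$ annihilates the continuous, $\Gamma_D$-vanishing functions that are quotiented out, and, together with the compatibility $r(v)=0$ on that subspace (\cref{compatibility-cg}), it makes \cref{b-correction-A} and \cref{b-correction} share the same solution modulo that subspace. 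Beyond this there is no real obstacle.
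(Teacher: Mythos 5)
Your proof is correct and follows essentially the same route as the paper's: split $\hat\bsigma_h^{cg}$ into the averaging flux and the correction, integrate the correction by parts, kill the volume term via the $\mathring{RT}$ orthogonality, identify the resulting facet sum with $\mathcal{A}(\jump{u_s^{\Delta}},\jump{v})=r(v)$ through the definition of $\mathbf{S}$, and cancel against $\nabla\cdot\tilde\bsigma_{k-1}^{cg}$. The only difference is presentational — you sum globally over the skeleton while the paper argues element by element — and your extra remarks on the $RT_f$ membership and the well-definedness of $\mathbf{S}(u_s^{\Delta})$ on the quotient space are sound additions that the paper leaves implicit.
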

\begin{proof}
	It is sufficient to prove \cref{cg-conservation-1} for any $v \in \mathbb{P}_{s}(K) \subset DG(\cT_h,s)$.
	By the definitions, integration by parts, \cref{b-correction-A} 
	\begin{equation}
	\begin{split}
		&( \nabla \cdot \hat\bsigma_h^{cg}, v)_{K} 
		= 
		(\nabla \cdot  \tilde\bsigma_{k-1}^{cg}, v)_{K} +
		(\nabla \cdot   \bsigma_s^{\Delta}, v)_{K} \\
		&= 
		(\nabla \cdot  \tilde\bsigma_{k-1}^{cg}, v)_{K} +
		( \bsigma_s^{\Delta},  \nabla v)_{K} +
		(  \bsigma_s^{\Delta} \cdot \bn_{F}, \jump{v})_{\partial K} \\
		&= 
		(\nabla \cdot  \tilde\bsigma_{k-1}^{cg}, v)_{K} +
		\sum_{F \in \cE_{K} \setminus \cE_{N}}( A_{F} h_{F}^{-1} \jump{u_{s}^{\Delta}}, \jump{v})_{F} 
		=  (\nabla \cdot  \tilde\bsigma_{k-1}^{cg}, v)_{K} + r(v) \\
		&= (\nabla \cdot  \tilde\bsigma_{k-1}^{cg}, v)_{K}  +  (f,v)_K
		- (\gradt \tilde \bsigma_{k-1}^{cg},v)_{K}  =  (f,v)_{K}.
		\end{split}
	\end{equation}
	We have completed the proof of the lemma.
\end{proof}
\begin{lemma}\label{lem:sigma-hat-2}
Define for some $0 \le s \le k-1$
\begin{equation}\label{recover--cg1}
	\hat \bsigma_s^{cg} = \tilde \bsigma_{s}^{cg} + \bsigma_s^{\Delta},
\end{equation}
where $\bsigma_s^{\Delta} =\mathbf{S}({u_{s}^{\Delta}})$ in which $u_{s}^{\Delta}$ is the solution to \cref{b-correction-A}.
Then $\hat \bsigma_s^{cg}  \in RT_f (\cT_{h},s) $ satisfies
	\begin{equation}\label{convervation1}
		(\nabla \cdot \hat \bsigma_s^{cg}, v) = (f,v) \quad \forall v \in DG(\cT_{h},s).
	\end{equation}
\end{lemma}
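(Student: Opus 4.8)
The plan is to replay the argument behind \cref{lem:sigma-hat-1}, adding one new ingredient that accounts for using the order-$s$ averaging flux $\tilde\bsigma_s^{cg}$ instead of $\tilde\bsigma_{k-1}^{cg}$. Since $\tilde\bsigma_s^{cg}=\tilde\bsigma_s(u_k^{cg})\in RT(\cT_h,s)$ and $\bsigma_s^{\Delta}=\mathbf{S}(u_s^{\Delta})\in\mathring{RT}(\cT_h,s)\subset RT(\cT_h,s)$, membership $\hat\bsigma_s^{cg}\in RT(\cT_h,s)$ is immediate. I would then write $\hat\bsigma_s^{cg}=\hat\bsigma_h^{cg}+(\tilde\bsigma_s^{cg}-\tilde\bsigma_{k-1}^{cg})$, where $\hat\bsigma_h^{cg}=\tilde\bsigma_{k-1}^{cg}+\bsigma_s^{\Delta}$ is the flux of \cref{lem:sigma-hat-1}, built from the \emph{same} correction $\bsigma_s^{\Delta}$. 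By \cref{lem:sigma-hat-1} we have $(\gradt\hat\bsigma_h^{cg},v)=(f,v)$ for all $v\in DG(\cT_h,s)$, so \cref{convervation1} reduces to the divergence-moment identity $(\gradt\tilde\bsigma_s^{cg},v)=(\gradt\tilde\bsigma_{k-1}^{cg},v)$ for all $v\in DG(\cT_h,s)$, plus the Neumann trace condition.

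For the divergence-moment identity I would fix $K\in\cT_h$ and $v\in\mathbb{P}_s(K)$, integrate by parts, and compare: for $\ell\in\{s,k-1\}$,
\[
(\gradt\tilde\bsigma_\ell(u_k^{cg}),v)_K=-(\tilde\bsigma_\ell(u_k^{cg}),\nabla v)_K+\sum_{F\in\cE_K}\mathrm{sign}_K(F)\langle\tilde\bsigma_\ell(u_k^{cg})\cdot\bn_F,v\rangle_F.
\]
Because $\nabla v\in\mathbb{P}_{s-1}(K)^d\subseteq\mathbb{P}_{\ell-1}(K)^d$, relation \cref{rt:1:dg} gives $-(\tilde\bsigma_\ell(u_k^{cg}),\nabla v)_K=(A\nabla u_k^{cg},\nabla v)_K$; because $v|_F\in\mathbb{P}_s(F)\subseteq\mathbb{P}_\ell(F)$, relation \cref{rt:1:a} gives $\langle\tilde\bsigma_\ell(u_k^{cg})\cdot\bn_F,v\rangle_F$ equal to $-\langle\{A\nabla u_k^{cg}\cdot\bn_F\}_w^F,v\rangle_F$ on $\cE\setminus\cE_N$ and to $\langle g,v\rangle_F$ on $\cE_N$. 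None of these depends on $\ell$, so the two sides coincide on each $K$, and summing over $\cT_h$ gives the identity (the volume terms simply drop when $s=0$). Consequently, for all $v\in DG(\cT_h,s)$,
\[
(\gradt\hat\bsigma_s^{cg},v)=(\gradt\hat\bsigma_h^{cg},v)+(\gradt(\tilde\bsigma_s^{cg}-\tilde\bsigma_{k-1}^{cg}),v)=(f,v),
\]
and since $\gradt\hat\bsigma_s^{cg}\in DG(\cT_h,s)$ this yields $\gradt\hat\bsigma_s^{cg}=f_s$ in $\O$, that is, \cref{convervation1}.

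For the Neumann condition: on each $F\in\cE_N$, $\bsigma_s^{\Delta}\cdot\bn_F=0$ directly from $\bsigma_s^{\Delta}\in\mathring{RT}(\cT_h,s)$, while $\tilde\bsigma_s^{cg}\cdot\bn_F\in\mathbb{P}_s(F)$ and \cref{rt:1:a} forces $\langle\tilde\bsigma_s^{cg}\cdot\bn_F,\phi\rangle_F=\langle g,\phi\rangle_F$ for all $\phi\in\mathbb{P}_s(F)$, hence $\tilde\bsigma_s^{cg}\cdot\bn_F=g_{s,F}$; thus $\hat\bsigma_s^{cg}\cdot\bn_F=g_{s,F}$. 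Together with $\gradt\hat\bsigma_s^{cg}=f_s$ this gives $\hat\bsigma_s^{cg}\in RT_f(\cT_h,s)$, completing the proof. The one genuinely new step — and the place to be careful — is the divergence-moment identity between $\tilde\bsigma_s^{cg}$ and $\tilde\bsigma_{k-1}^{cg}$: one cannot simply reuse the computation in \cref{lem:sigma-hat-1}, since the residual $r(\cdot)$ defining \cref{b-correction-A} is built from $\tilde\bsigma_{k-1}^{cg}$, not $\tilde\bsigma_s^{cg}$. I expect this to be the main obstacle, although it is routine once one observes that the degree-$\le s$ moments of $\gradt\tilde\bsigma_\ell(u_k^{cg})$ depend only on the facet data of degree $\le s$ and the element data of degree $\le s-1$, which are common to $\ell=s$ and $\ell=k-1$.
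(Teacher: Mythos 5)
Your proposal is correct and follows essentially the same route as the paper: the paper's proof reduces \cref{convervation1} to the argument of \cref{lem:sigma-hat-1} together with the identity $(\nabla\cdot\tilde\bsigma_{s}^{cg},v)_K=(\nabla\cdot\tilde\bsigma_{k-1}^{cg},v)_K$ for all $v\in DG(\cT_h,s)$, which is exactly your divergence-moment identity. You additionally supply the verification of that identity (via integration by parts and the defining relations \cref{rt:1:a}--\cref{rt:1:dg}) and the Neumann trace check, both of which the paper leaves implicit.
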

\begin{proof}
	\cref{convervation1} can be proved similarly as in \cref{lem:sigma-hat-1} and the fact that 
	\[
	(\nabla \cdot  \tilde\bsigma_{s}^{cg}, v)_{K}
		= (\gradt \tilde \bsigma_{k-1}^{cg},v)_{K}  \quad \forall v \in DG(\cT_{h},s).
	\]
\end{proof}

\section{Automatic Global Reliability}\label{sec:5}
We first cite the following reliability result proved in \cite{cai2021generalized}.
 \begin{theorem}\label{PS:2d}
Let $u\in H^1_{D}(\O)$ be the solution of  {\em (\ref{pde})}. In two and three dimensions,
 for all $w \in H^1(\cT_{h})$, we have
\[
	 \|A^{1/2}\nabla_h(u-w)\|^2 =
	 \inf_{\btau\in \S_f(\O)}\| A^{-1/2}\btau+
	 A^{1/2}\nabla_h w\|^2 + \inf_{v\in H^1_{D}(\O)}\|A^{1/2}\nabla_h(v-w)\|^2.
\]
 \end{theorem}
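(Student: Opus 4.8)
The plan is to establish this as an orthogonal decomposition in the broken energy inner product, essentially a Pythagorean identity. First I would fix $w \in H^1(\cT_h)$ and introduce the conforming part of the error: let $v^* \in H^1_D(\Omega)$ be the elliptic projection of $w$, i.e. the minimizer of $\|A^{1/2}\nabla_h(v-w)\|$ over $v \in H^1_D(\Omega)$, which is characterized by the orthogonality relation $(A\nabla(v^*-w),\nabla\phi) = 0$ for all $\phi \in H^1_D(\Omega)$ (here $\nabla_h$ acts as $\nabla$ on the conforming functions). Since $u \in H^1_D(\Omega)$ solves \eqref{vp} and $u - v^* \in H^1_D(\Omega)$ is a valid test function, I would write $u - w = (u - v^*) + (v^* - w)$ and compute $\|A^{1/2}\nabla_h(u-w)\|^2$ by expanding the square; the cross term $(A\nabla_h(u-v^*), \nabla_h(v^*-w))$ vanishes by the characterization of $v^*$ (note $u - v^* \in H^1_D(\Omega)$ so $\nabla_h(u-v^*) = \nabla(u-v^*)$). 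This yields
\[
\|A^{1/2}\nabla_h(u-w)\|^2 = \|A^{1/2}\nabla_h(u-v^*)\|^2 + \|A^{1/2}\nabla_h(v^*-w)\|^2,
\]
and the second term already equals $\inf_{v\in H^1_D(\Omega)}\|A^{1/2}\nabla_h(v-w)\|^2$ by definition of $v^*$.

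Next I would identify the first term with the flux infimum. The key is the Prager--Synge-type equality: for the \emph{conforming} function $v^* \in H^1_D(\Omega)$,
\[
\|A^{1/2}\nabla(u - v^*)\|^2 = \inf_{\btau \in \S_f(\Omega)} \|A^{-1/2}\btau + A^{1/2}\nabla v^*\|^2,
\]
where the infimum is attained at the equilibrated flux $\btau = -A\nabla u \in \S_f(\Omega)$ (using that $-\nabla\cdot(A\nabla u) = f$ and the Neumann condition). The inequality "$\geq$" here is the standard hypercircle argument: for any $\btau \in \S_f(\Omega)$, write $A^{-1/2}\btau + A^{1/2}\nabla v^* = A^{-1/2}(\btau + A\nabla u) + A^{1/2}\nabla(u - v^*)$ and observe the two pieces are $L^2$-orthogonal because $(\btau + A\nabla u, \nabla(u-v^*)) = 0$ — this follows from integration by parts using $\nabla\cdot\btau = \nabla\cdot(A\nabla u)$ (in the appropriate weak/data sense via $\S_f$) together with $u - v^* \in H^1_D(\Omega)$ and the matching Neumann data. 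The reverse inequality is immediate from the attained minimizer.

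The final step is to replace $\nabla v^*$ by $\nabla_h w$ inside that infimum. Since $\btau \mapsto \S_f(\Omega)$ ranges over the same affine set regardless, and the difference $v^* - w$ is exactly what got peeled off, I would argue that
\[
\inf_{\btau\in\S_f(\Omega)}\|A^{-1/2}\btau + A^{1/2}\nabla_h w\|^2 = \|A^{1/2}\nabla_h(u - v^*)\|^2
\]
by a second orthogonality: decompose $A^{-1/2}\btau + A^{1/2}\nabla_h w = (A^{-1/2}\btau + A^{1/2}\nabla v^*) + A^{1/2}\nabla_h(w - v^*)$ — but this is not orthogonal in general, so instead the cleaner route is to observe that the minimizing $\btau$ over $\S_f(\Omega)$ is unaffected and to directly expand $\|A^{-1/2}\btau + A^{1/2}\nabla_h w\|^2$ against $\btau = -A\nabla u$, then recombine. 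I expect the main obstacle to be precisely this bookkeeping: making the two orthogonality arguments interlock cleanly so that the two infima land on $\|A^{1/2}\nabla_h(u-v^*)\|^2$ and $\|A^{1/2}\nabla_h(v^*-w)\|^2$ respectively, and handling the meaning of $\S_f(\Omega)$ and its divergence/Neumann constraints carefully when $f \in H^{-1}(\Omega)$ and $g \in H^{-1/2}(\Gamma_N)$ so that all the integration-by-parts steps are legitimate. Since this theorem is quoted from \cite{cai2021generalized}, I would present the argument at the level of these two Pythagorean identities and refer there for the technical details.
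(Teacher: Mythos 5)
The paper itself does not prove this theorem: it is quoted verbatim from \cite{cai2021generalized}, so there is no in-paper argument to compare against. Judged on its own terms, your outline has the right architecture and your first Pythagorean identity is correct: with $v^*\in H^1_D(\O)$ the $A$-weighted elliptic projection of $w$, the orthogonality $(A\nabla_h(v^*-w),\nabla\phi)=0$ for all $\phi\in H^1_D(\O)$, applied to $\phi=u-v^*\in H^1_D(\O)$, gives
\[
\|A^{1/2}\nabla_h(u-w)\|^2=\|A^{1/2}\nabla(u-v^*)\|^2+\|A^{1/2}\nabla_h(v^*-w)\|^2,
\]
and the second term is by construction the second infimum in the statement.

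The genuine gap is the step you yourself leave as unresolved ``bookkeeping'': identifying $\inf_{\btau\in \S_f(\O)}\|A^{-1/2}\btau+A^{1/2}\nabla_h w\|^2$ with $\|A^{1/2}\nabla(u-v^*)\|^2$. Your first proposed decomposition is, as you correctly note, not orthogonal (the pairing $(\btau,\nabla_h(w-v^*))$ cannot be integrated by parts elementwise without producing jump terms), and ``directly expand against $\btau=-A\nabla u$ and recombine'' does not close the argument either, because the cross term $(\btau+A\nabla u,\nabla_h(w-u))$ survives for broken $w$. The missing observation is that the same orthogonality that defines $v^*$ says precisely that $\bm{\rho}:=A\nabla_h(w-v^*)$ satisfies $(\bm{\rho},\nabla\phi)=0$ for all $\phi\in H^1_D(\O)$; hence $\bm{\rho}\in H(\mbox{div};\O)$ with $\nabla\cdot\bm{\rho}=0$ and $\bm{\rho}\cdot\bn=0$ on $\Gamma_N$, i.e.\ $\bm{\rho}$ belongs to the set of differences of elements of $\S_f(\O)$. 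Consequently $\btau\mapsto\btau+\bm{\rho}$ is a bijection of $\S_f(\O)$ onto itself, and since
\[
A^{-1/2}\btau+A^{1/2}\nabla_h w=A^{-1/2}(\btau+\bm{\rho})+A^{1/2}\nabla v^*,
\]
the infimum taken with the broken gradient $\nabla_h w$ coincides with the infimum taken with the conforming gradient $\nabla v^*$, which is the classical Prager--Synge infimum you already evaluated as $\|A^{1/2}\nabla(u-v^*)\|^2$. With this one line your two identities interlock and the proof is complete. (Your caveat about $f\in H^{-1}(\O)$ is fair but is inherited from the statement itself: for $\S_f(\O)$ to be nonempty with $\gradt\btau=f$ and $\btau\in H(\mbox{div};\O)$ one implicitly needs $f\in L^2(\O)$.)
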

 where
  \[
\S_f(\O) = \Big\{ \btau \in H(\divvr;\O) : \gradt \btau =f  \mbox{ in } \O \;\mbox{ and }
  \; \btau \cdot \bn = {g  \mbox{ on } \Gamma_N} \Big\}.
  \]
  Based on the theorem and Corollary~3.5 in \cite{cai2021generalized}, 
the construction of an equilibrated a posteriori
error estimator for discontinuous finite element solutions is reduced to recover an equilibrated 
in $\S_f(\O)$ and to recover 
either a potential function in $H^1(\Omega)$ or a curl free vector-valued
function in $H(\curll;\Omega)$. In this paper, we focus on the part of equilibrated flux recovery.
We note that $  \displaystyle\inf_{\btau\in \S_f(\O)}\| A^{-1/2}\btau+A^{1/2}\nabla_h w\|^2 $ is usually referred to as the conforming error of $w$.

Note that in our case, the recovered flux lies in $RT_f(\cT_h,s)$ or $RT_f(\cT_h, k-1, s)$, which is not generally in $\S_f(\O)$. We can adjust and obtain the following reliability result similar to \cref{PS:2d} except with an additional oscillation term on the right, 
\begin{equation}\label{reliability-result}
\begin{split}
	 \|A^{1/2}\nabla_h(u- u_{k}^{cg})\|^2 &\le 
	\| A^{-1/2}\hat \bsigma_{h} +
	 A^{1/2}\nabla_h u_{k}^{cg}\|^2 \\
	 & + \inf_{v\in H^1_{D}(\O)}\|A^{1/2}\nabla_h(v-u_{k}^{cg})\|^2 + c\|f - f_{s}\|^{2}_{\O}.
\end{split}
\end{equation}
This additional oscillation term is of higher order when $f|_{K}$ is smooth for all $K \in \cT_{h}$. It is usually neglected in the computation.

We define the local indicator for the conforming error by
\begin{equation} \label{estimators:cf_l}
	 \eta_{\sigma,K} = \|A^{-1/2} (\hat \bsigma_h - \bsigma_{h}) \|_K  
	\end{equation}
where 
$ \bsigma_h = - A\nabla u_{h}$ is the numerical flux corresponding to the finite element solution $u_{h}$ and
$\hat \bsigma_h$ is a recovered equilibrated flux based on $\bsigma_h$.
The  global estimator is then defined as
\begin{equation} \label{estimators:cf_g}
	\eta_\sigma =\left( \sum_{K \in \cT_h} \eta^2_{\sigma,K} \right)^{1/2}
	=\|A^{-1/2} (\hat \bsigma_h-  \bsigma_h) \|_{0,\O},
\eeq 
in which $\hat \bsigma_h$ is the recovered equilibrated flux based on $u_{h}$.

\section{Robust Efficiency} \label{sec:6}
In this section, we prove robust efficiency of the error indicator
defined in \cref{estimators:cf_l} for the recovered flux
$\hat\bsigma_{h} =\hat{\bsigma}_{h}^{cg}$ from \cref{lem:sigma-hat-1},
when applied to the conforming finite element solution $u_k^{cg}$.
The remaining cases for  $\hat{\bsigma}_{s}^{cg}, 0 \le s \le k-1 $ can be treated similarly.
Here, \emph{robustness} means that the efficiency constant is
independent of the jump in the diffusion coefficient $A(x)$.

 For recovered fluxes constructed from discontinuous Galerkin solutions, we refer to \cite{Ai:07b,ern2007accurate,cai2021generalized,ainsworth2000posteriori}.

%


For simplicity, we assume that the diffusion coefficient $A(x)$ is a piecewise constant function and that $A_{F}^{-} \le A_{F}^{+}$ for all $F \in \cE_{I}$. From here to thereafter, we use $a \lesssim b$ to denote that  that 
$a \le C b$ for a generic constant that is independent of the mesh size and the jump of $A$.

For simplicity, we restrict our analysis in the two dimensions.
To show that the efficiency constant for the conforming case is independent of the jump of $A(x)$,
as usual, we assume that the distribution of the coefficients $A_K$
for all $K\in \cT_{h}$ is locally quasi-monotone \cite{petzoldt2002posteriori}, which is
slightly weaker than Hypothesis 2.7 in \cite{bernardi2000adaptive}.
The assumption has been used in the literature and remains essential for rigorous theoretical proofs. However, numerical experiments and practical applications indicate that this assumption is not necessary in computations, as the method performs reliably even when it does not strictly hold, e.g., see \cite{cai2017improved}.

For convenience of readers, we restate the definition of quasi-monotonicity.
Let $\o_z$ be the union of all elements having $z$ as a vertex.
For any $z\in\cN$, let
 \[
 \hat{\omega}_z=\{K\in\omega_z \,:\, A_K = \max_{K'\in\omega_z} A_{K'}\}.
\]
\begin{definition}\label{defnquasimonotone}
Given a vertex $z \in \cN$, the distribution of the coefficients $A_K$, $K\in\omega_z$,
is said to be {\em quasi-monotone} with respect to the vertex $z$
if there exists a subset $\tilde{\o}_{K,z,qm}$ of $\omega_z$ such that the union
of elements in $\tilde{\o}_{K,z,qm}$ is a Lipschitz domain and that
\begin{itemize}
\item if $z\in\cN\backslash\cN_\sD$, then $\{K\}\cup \hat{\o}_z
\subset \tilde{\o}_{K,z,qm}$
and $A_K\leq A_{K'} \; \forall K' \in \tilde{\o}_{K,z,qm}$;
\item if $z\in\cN_\sD$, then $K\in \tilde{\o}_{K,z,qm}$,
$\p\tilde{\omega}_{K,z,qm}\cap\Gamma_D \neq \emptyset$, and
$A_K\leq A_{K'} \; \forall K' \in \tilde{\o}_{K,z,qm}$.
\end{itemize}
The distribution of the coefficients $A_K$, $K\in\cT$, is said to be
locally {\em quasi-monotone} if it is quasi-monotone with respect to
every vertex $z\in\cN$.
\end{definition}
 For a function $v \in DG(\cT_h, s)$, we choose to employ the classical Lagrangian basis functions. 
 For a function $v \in DG^{0}(\cT_h, s)$,  since only  $\jump{v}|_{\cE \setminus \cE_{N}}$ is needed, from here to thereafter, we assume that $v$ satisfies the following conditions. 
 \begin{enumerate}
 \item
The interior degrees of freedom of $v$ are zero, i.e., $v(\bx_{i,K}) =0, i=1, \cdots, m_{s}$.
 \item
Let $\{\bx_{i,F}\}_{i=1}^{s-1} (s \ge 1)$be the set of all Lagrange points on $F$ excluding the vertices $\bx_{s,F}$ and $\bx_{e,F}$ for the space $P_{k}(F)$. For each $F \in \cE_{I}$ and $\bx_{i,F} \in F$, let $v|_{K_{F}^{+}}(\bx_{i,F})=0$. 
\item
For each $z \in \cN$, we denote by $K_{z}$ be a element in $\hat w_{z}$ and let $v|_{K_{z}}(z) =0$.
\item When $F \in \cE_{N}$, we simply let $v|_{F} \equiv 0$.
 \end{enumerate}
 It's not difficult to prove that with the above four types of restrcitions, any function  $v \in DG^{0}(\cT_h, s)$ is uniquely determined. 
 
 \begin{lemma}[Uniqueness in $ DG^{0}(\cT_h, s)$]
Let $u,v \in DG^{0}(\cT_h,s)$ satisfy the above assumptions.
If
\[
\jump{u} = \jump{v}
\quad \text{on } \cE \setminus \cE_N,
\]
then
\[
u \equiv v \quad \text{in } \Omega.
\]
\end{lemma}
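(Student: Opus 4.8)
The plan is to argue by linearity. Set $w := u - v$. Since the four normalization assumptions imposed on functions in $DG^{0}(\cT_h,s)$ are homogeneous and linear, $w$ again satisfies all of them, and by hypothesis $\jump{w} = 0$ on $\cE \setminus \cE_N = \cE_I \cup \cE_D$. It therefore suffices to show that any $w$ obeying these assumptions with vanishing jumps on $\cE \setminus \cE_N$ is identically zero in $\O$.

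First I would upgrade the regularity of $w$. Since $\jump{w}|_F = w|_F^- - w|_F^+$ vanishes on every interior facet $F \in \cE_I$, the elementwise polynomial $w$ has matching traces across all interior facets, hence $w \in C^{0}(\overline{\O})$ and, being piecewise $\mathbb{P}_s$, $w \in CG(\cT_h,s)$. Moreover $\jump{w}|_F = w|_F^-$ vanishes on every Dirichlet facet $F \in \cE_D$, so $w = 0$ on $\Gamma_D$. From here on I treat $w$ as a conforming piecewise polynomial and use the unisolvent set of Lagrange nodes of $CG(\cT_h,s)$, partitioned into element-interior nodes, facet-interior nodes, and mesh vertices.

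The core of the argument is then a bookkeeping over the Lagrange nodes showing that each nodal value of $w$ is forced to zero. The element-interior nodes vanish by the assumption that the interior degrees of freedom of $w$ are zero. At a facet-interior node $\bx_{i,F}$ on an interior facet $F \in \cE_I$, the normalization assumption gives $w|_{K_F^+}(\bx_{i,F}) = 0$, and continuity of $w$ then yields $w(\bx_{i,F}) = 0$; the facet-interior nodes on Neumann facets vanish because $w \equiv 0$ there, and those on Dirichlet facets vanish because $w = 0$ on $\Gamma_D$. Finally, at a mesh vertex $z \in \cN$ the assumption gives $w|_{K_z}(z) = 0$, and continuity of $w$ propagates this to $w(z) = 0$ on every element meeting $z$ (for vertices on $\Gamma_D$ this also follows from $w = 0$ on $\Gamma_D$). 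Hence $w$ vanishes at every Lagrange node of $CG(\cT_h,s)$, so $w \equiv 0$ in $\O$, i.e.\ $u \equiv v$.

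The only delicate point, and the step I would be most careful about, is checking that the four normalization assumptions, once combined with the continuity of $w$ obtained from the zero-jump hypothesis, genuinely exhaust all the Lagrange degrees of freedom of $CG(\cT_h,s)$ with no node left unconstrained and none constrained in two mutually inconsistent ways; this is precisely why the assumptions single out $w|_{K_F^+}$ on interior facets and one distinguished element $K_z$ at each vertex rather than prescribing values on every incident element. Everything else is routine unisolvence bookkeeping.
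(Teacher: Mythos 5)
Your proposal is correct and follows essentially the same route as the paper: set $w=u-v$, deduce $w\in CG(\cT_h,s)$ with vanishing boundary trace from the zero-jump hypothesis, and then check node by node (element-interior via Assumption~1, facet-interior via Assumption~2 and continuity, vertices via Assumption~3 and continuity) that every Lagrange degree of freedom of $w$ vanishes. Your extra remark about consistency of the normalization constraints is a sensible caution but does not change the argument.
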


\begin{proof}
Let $w := u - v \in DG^{0}(\cT_h,s)$.
By the assumption $\jump{u} = \jump{v}$ on $\cE \setminus \cE_N$ and
$u|_{F} = v|_{F} \equiv 0$ for $F \in \cE_N$, we have
\[
\jump{w} = 0 \quad \text{on } \cE.
\]
Hence, $w \in CG(\cT_{h},s)$ and $w =0$ on $\partial \O$.

Moreover, by Assumption~1, $w$ vanishes at all interior degrees of
freedom. By Assumption~2, $w$ vanishes at all interior facet Lagrange
points, except possibly at the vertices. Finally, by Assumption~3,
$w$ vanishes at all vertex degrees of freedom. Hence, all degrees of
freedom of $w$ are zero, and therefore
\[
w \equiv 0.
\]
Consequently, $u \equiv v$. This completes the proof of the lemma.

\end{proof}

We first prove the following lemma.
\begin{lemma}\label{lemma:boundforH1}
For any $v \in DG^{0}(\cT_h, s)$, there holds
\begin{equation}\label{qw-0}
	\|A^{1/2} \nabla v\|_{\cT_{h}} \le C  \sum_{F \in \cE \setminus \cE_{N}} h_{F}^{-1/2} A_{F}^{1/2} \|\jump{v}\|_{F}.
\end{equation}
where the constant $C$ is independent of the mesh and the jump of the coefficient $A(x)$,
\end{lemma}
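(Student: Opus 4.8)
The plan is to estimate $\|A^{1/2}\nabla v\|_K$ element by element, exploiting the finite-dimensionality of $\mathbb{P}_s(K)$ together with the specific degree-of-freedom normalization of $v \in DG^0(\cT_h,s)$. On a reference element $\hat K$, the map from the nodal values $\{v(\bx_{i,\hat K})\}$ to $\|\nabla v\|_{\hat K}$ is a norm on a finite-dimensional space, so $\|\nabla v\|_{\hat K} \lesssim \big(\sum_i |v(\bx_{i,\hat K})|^2\big)^{1/2}$; scaling back gives $\|\nabla v\|_K \lesssim h_K^{d/2-1}\big(\sum_i |v(\bx_{i,K})|^2\big)^{1/2}$ in $d$ dimensions, i.e. $\big(\sum_i |v(\bx_{i,K})|^2\big)^{1/2}$ in two dimensions up to the $h_K$ weight. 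So the real task reduces to controlling the Lagrange nodal values of $v$ on each $K$ by the facet-jump quantities on the right-hand side.

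Next I would walk through how each nodal value of $v$ on a given element $K$ is "reached" from the normalization points prescribed in Assumptions~1--4. The interior nodes contribute nothing by Assumption~1. For a facet Lagrange point $\bx_{i,F}$ with $F \in \cE_I$, Assumption~2 kills $v|_{K_F^+}(\bx_{i,F})$, so $v|_{K_F^-}(\bx_{i,F}) = \jump{v}(\bx_{i,F})$, directly bounded (via the equivalence of the $\ell^2$-nodal norm and the $L^2(F)$-norm on $\mathbb{P}_s(F)$, with the $h_F^{-1/2}$ scaling factor) by $h_F^{-1/2}\|\jump{v}\|_F$; for $F \in \cE_D$, $v|_F = \jump{v}|_F$ similarly. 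The nontrivial part is the vertex values: by Assumption~3, $v$ vanishes at $z$ on one chosen element $K_z \in \hat\omega_z$, and to pass from $K_z$ to any other element $K \ni z$ we traverse a chain of elements in $\omega_z$, each pair sharing a facet $F$ through $z$, and at each step the change in the vertex value $v(z)$ telescopes into a sum of jumps $\jump{v}(z)$ across the intervening facets. Thus $|v|_K(z)|$ is bounded by a sum of $|\jump{v}(z)|$ over facets in $\omega_z$, and each $|\jump{v}(z)|$ is controlled by $h_F^{-1/2}\|\jump{v}\|_F$ by the same nodal/$L^2$ equivalence on $F$. Collecting the contributions of all nodes of $K$ and then summing $\|A^{1/2}\nabla v\|_K^2 = A_K\|\nabla v\|_K^2$ over $K$, using shape-regularity so that $h_F \simeq h_K$ and each facet appears in boundedly many patches, yields the stated bound — but with $A_K$ on the left and $A_F = \min(A_F^+, A_F^-)$ on the right.

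The main obstacle, and the only place the hypotheses beyond shape-regularity enter, is matching the coefficient weights: we need $A_K \|\nabla v\|_K^2 \lesssim \sum_{F}(A_F/h_F)\|\jump{v}\|_F^2$ with a constant independent of the jumps of $A$, even though the facet weight $A_F$ is the \emph{minimum} of the two adjacent values while $A_K$ could be the larger one. This is exactly where local quasi-monotonicity (Definition~\ref{defnquasimonotone}) is used: for a vertex $z$, the telescoping chain from $K_z \in \hat\omega_z$ to $K$ can be routed through the quasi-monotone subdomain $\tilde\omega_{K,z,qm}$ along which the coefficient is nondecreasing toward the maximum, so that every facet $F$ used in the chain has $A_F \gtrsim A_K$ (after possibly re-choosing $K_z$ to sit at the coefficient maximum $\hat\omega_z$). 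Hence each term $A_K |\jump{v}(z)|^2 h_F^{-1}$ is absorbed into $(A_F/h_F)\|\jump{v}\|_F^2$ uniformly. For the facet-interior nodes the issue is milder: if $A_K = A_F^+$ we have already used $v|_{K_F^+}(\bx_{i,F}) = 0$ so no bound is needed there, and if $A_K = A_F^-= A_F$ the weights already match. Assembling these weighted estimates over all vertices and facets of $\cT_h$, with the patch overlap bounded by shape-regularity, completes the argument; I would structure the write-up as (i) reference-element nodal bound for $\|\nabla v\|_K$, (ii) facet-node estimates, (iii) the vertex-node telescoping/quasi-monotonicity argument, (iv) summation over $\cT_h$.
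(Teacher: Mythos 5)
Your proposal is correct and follows essentially the same route as the paper's proof: a nodal-value bound for $\|\nabla v\|_K$, elimination of interior and facet-interior degrees of freedom via the normalization assumptions, and a telescoping argument for the vertex values through the quasi-monotone patch $\tilde\omega_{K,z,qm}$ so that the coefficient weights $A_K$ and $A_F$ match uniformly in the jumps. The only cosmetic differences are that you phrase the nodal bound in $\ell^2$ rather than $\ell^1$ form and make the $h_F^{-1/2}$ nodal-to-$L^2(F)$ scaling explicit, which the paper leaves implicit.
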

\begin{proof}
For any $K \in \cT_{h}$, we first have that
\beq\label{qw-1}
\begin{split}
	\|\nabla v\|_{K} \lesssim  \sum_{z \in \cN_{K}} | v_{K}(z)| 
	+ \sum_{F \in \cE_{K}} \sum_{i=1}^{s-1} |v_{K}(\bx_{i,F})|.
\end{split}
\eeq
Fix any $z \in \cN_K$. If $K = K_{z}$, we have $v_{K}(z)=0$.
If $K \neq K_{z}$, applying the triangle equality and the property of $v$ yields
\beq
\begin{split}
 | v_{K}(z)|  \le \sum_{F \subset \tilde \o_{K,z,qm} \cap \cE_{z}} | \jump{v}|_F(z)|.
\end{split}
\eeq
Here we choose $\tilde \o_{K,z,qm}$ to be open, so $\tilde \o_{K,z,qm} \cap \cE_{z}$ does not contain the two boundary edges $ \partial_{\tilde \o_{K,z,qm}} \cap \cE_{z}$.
Since $A(x)$ is monotone along $\tilde \o_{K,z,qm}$, we have
\beq
\begin{split}
 A_{K}^{1/2}| v_{K}(z)|  \le \sum_{F \subset \tilde \o_{K,z,qm} \cap \cE_{z}} A_{F}^{1/2}| \jump{v}|_F(z)|.
\end{split}
\eeq
Finally, for any $\bx_{i,F}$, $i=1, \cdots, s-1$ and $F \in \cE_{K}$ we have
\beq\label{qw-2}
A_{K}^{1/2}|v_{K}(\bx_{i,F})| = 
\begin{cases}
0 & \mbox{if }K = K_{F}^{+}\\
A_{K}^{1/2}| \jump{v}|_{F}(\bx_{i,F})|   & \mbox{if } K = K_{F}^{-}
\end{cases}
\le A_{F}^{1/2}| \jump{v}|_{F}(\bx_{i,F})|.
\eeq
Recall we assumed that $A_{F}^{-} \le A_{F}^{+}$. 
Combing \cref{qw-1}--\cref{qw-2}, we have for any $K \in \cT_{h}$,
\beq
\begin{split}
	\|A^{1/2}\nabla v\|_{K}& \lesssim  
	\sum_{z \in \cN_{K}}  \sum_{F \in \cE_{z}} 
	A_{F}^{1/2}| \jump{v}|_{F}(z)
	+  \sum_{F \in \cE_K}\sum_{i=1}^{s-1}A_{F}^{1/2}| \jump{v}|_{F}(\bx_{i,F})| 
	\\
	& \lesssim\sum_{z \in \cN_{K}}  \sum_{F \in \cE_{z}} A_{F}^{1/2} h_{F}^{-1/2} \| \jump{v}\|_{F}.
\end{split}
\eeq
and, hence, \cref{qw-0}. This completes the proof of the lemma.
\end{proof}


\begin{lemma}\label{lem:effi-cg-1}
Let $u_{s}^{\Delta} \in DG^{0}(\cT_h, s)$ be the solution of \cref{b-correction-A}. We have the following estimation for $\tri u_{s}^{\Delta}\tri$:
\beq \label{global-efficiency}
	\tri u_{s}^{\Delta}\tri \le C \| A^{1/2} \nabla (u - u_{k}^{cg})\| + \osc{(f)},
	\eeq
	where the constant $C$ is independent of the mesh size and the jump of the coefficient of $A(x)$. 
\end{lemma}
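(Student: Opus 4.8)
The plan is to test the defining variational problem \cref{b-correction-A} with $v=u_s^\Delta$, which, by the definition of the norm $\tri\cdot\tri$, gives $\tri u_s^\Delta\tri^2=\mathcal{A}(\jump{u_s^\Delta},\jump{u_s^\Delta})_F=r(u_s^\Delta)$, and then to establish the bound $r(u_s^\Delta)\le C\bigl(\|A^{1/2}\nabla(u-u_k^{cg})\|+\osc{(f)}\bigr)\tri u_s^\Delta\tri$; dividing by $\tri u_s^\Delta\tri$ yields \cref{global-efficiency}. Throughout, $u_s^\Delta$ denotes the canonical representative fixed by Assumptions~1--4, which is what makes $\|A^{1/2}\nabla u_s^\Delta\|$ and the elementwise $L^2$ norms well defined on the quotient space $DG^0(\cT_h,s)$ and hence allows \cref{lemma:boundforH1} to be applied to it.

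The first step is to turn $r(u_s^\Delta)$ into a bona fide residual. Unfolding $r(v)=(f-\nabla\cdot\tilde\bsigma_{k-1}^{cg},v)$, integrating by parts twice elementwise, and using the defining moment relations \cref{rt:1:a}--\cref{rt:1:dg} of the averaging flux together with the product-jump identity \cref{jump-id} — legitimate because $\nabla v|_K\in\mathbb{P}_{s-1}(K)^d\subset\mathbb{P}_{k-2}(K)^d$ and $v|_F\in\mathbb{P}_s(F)\subset\mathbb{P}_{k-1}(F)$ for $s\le k-1$ — one arrives at the elementwise-residual / weighted-flux-jump identity
\[
r\bigl(u_s^\Delta\bigr)=\sum_{K\in\cT_h}\bigl(f_s+\nabla\cdot(A\nabla u_k^{cg}),\,u_s^\Delta\bigr)_K-\sum_{F\in\cE_I}\bigl\langle\jump{A\nabla u_k^{cg}\cdot\bn_F},\,\{u_s^\Delta\}_F^w\bigr\rangle_F .
\]
Here the contributions along $\cE_D$ cancel identically (on a boundary facet the weighted average reduces to the one-sided trace), those along $\cE_N$ vanish since $u_s^\Delta|_F=0$ there, and $f$ may be replaced by $f_s$ in the volume term since $f-f_s\perp\mathbb{P}_s(K)\ni u_s^\Delta|_K$. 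An equivalent form, obtained by also inserting $f=-\nabla\cdot(A\nabla u)$ elementwise, reads
\[
r\bigl(u_s^\Delta\bigr)=\bigl(A\nabla(u-u_k^{cg}),\nabla_h u_s^\Delta\bigr)_{\cT_h}-\sum_{F\in\cE_I}\bigl\langle\{A\nabla(u-u_k^{cg})\cdot\bn_F\}_w^F,\jump{u_s^\Delta}\bigr\rangle_F-\sum_{F\in\cE_D}\bigl\langle A\nabla(u-u_k^{cg})\cdot\bn_F,u_s^\Delta\bigr\rangle_F ,
\]
which is convenient because its volume term is immediately bounded by $C\|A^{1/2}\nabla(u-u_k^{cg})\|\,\tri u_s^\Delta\tri$ through \cref{lemma:boundforH1}; either representation can be used in what follows.

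Next, apply Cauchy--Schwarz element- and facet-wise and split each product into a data factor and a test-function factor. The data factors are controlled by the classical robust local efficiency of residual estimators under the quasi-monotonicity assumption \cite{petzoldt2002posteriori,bernardi2000adaptive}: $h_KA_K^{-1/2}\|f_s+\nabla\cdot(A\nabla u_k^{cg})\|_K\lesssim\|A^{1/2}\nabla(u-u_k^{cg})\|_{\omega_K}+h_KA_K^{-1/2}\|f-f_s\|_{\omega_K}$ and $h_F^{1/2}A_F^{-1/2}\|\jump{A\nabla u_k^{cg}\cdot\bn_F}\|_F\lesssim\|A^{1/2}\nabla(u-u_k^{cg})\|_{\omega_F}+(\text{local data oscillation})$. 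The test-function factors are treated exactly as in the proof of \cref{lemma:boundforH1}: using Assumptions~1--4 one writes the nodal values of $u_s^\Delta$ on an element, and its one-sided traces on a facet, as telescoping sums of the jumps $\jump{u_s^\Delta}$ along quasi-monotone vertex and edge patches; together with $A_F^-\le A_F^+$, the edge scaling $|\phi(z)|\lesssim h_F^{-1/2}\|\phi\|_F$, the identity $\{u_s^\Delta\}_F^w=(u_s^\Delta)^+ +\omega_F^+\jump{u_s^\Delta}$, and the elementary weight bounds \cref{weight1}, this gives $h_K^{-1}A_K^{1/2}\|u_s^\Delta\|_K\lesssim\sum_{F\subset\tilde\omega_K}h_F^{-1/2}A_F^{1/2}\|\jump{u_s^\Delta}\|_F$ and $h_F^{-1/2}A_F^{1/2}\|\{u_s^\Delta\}_F^w\|_F\lesssim\sum_{F'\subset\tilde\omega_F}h_{F'}^{-1/2}A_{F'}^{1/2}\|\jump{u_s^\Delta}\|_{F'}$. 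Summing over $K$ and $F$, a final Cauchy--Schwarz together with the finite overlap of the patches $\tilde\omega_K,\tilde\omega_F$ bounds the test-function factor by $C\tri u_s^\Delta\tri$ and the data factor by $C\bigl(\|A^{1/2}\nabla(u-u_k^{cg})\|+\osc{(f)}\bigr)$, completing the argument.

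The step I expect to be the main obstacle is robustness, i.e.\ keeping every constant independent of the jumps in $A$. This is where the two structural features of the method are essential: the weighted averages, with weights $\omega_F^\pm=\alpha_F^\mp/(\alpha_F^-+\alpha_F^+)$, must be matched to the coefficient so that the weights produced in the residual identity are absorbed through \cref{weight1}; and the quasi-monotonicity hypothesis is indispensable both for the robust residual efficiency (it lets the bubble-function argument route through a sub-patch on which $A$ is monotone) and for the telescoping nodal bounds (so that $A$ is non-decreasing along the path joining a vertex to the element of maximal coefficient in its patch). Checking that the coupling term $\langle\jump{A\nabla u_k^{cg}\cdot\bn_F},\{u_s^\Delta\}_F^w\rangle_F$ — whose two factors carry different weights — is controlled by the $\tri\cdot\tri$-norm with a jump-independent constant is the delicate point; the remaining manipulations are standard bubble-function and Cauchy--Schwarz bookkeeping.
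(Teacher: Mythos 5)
Your proposal is correct and follows essentially the same route as the paper: both reduce the bound to the estimate $r(v)\lesssim\bigl(\|A^{1/2}\nabla(u-u_k^{cg})\|+\osc{(f)}\bigr)\tri v\tri$ (you via testing with $v=u_s^{\Delta}$ and dividing, the paper via the equivalent sup characterization $\tri u_s^{\Delta}\tri=\sup_{\tri v\tri=1}r(v)$), derive the same element-residual/weighted-flux-jump identity for $r(v)$, absorb the weights through \cref{weight1}, control the test-function traces by jumps via the quasi-monotone telescoping argument of \cref{lemma:boundforH1}, and finish with Cauchy--Schwarz and the classical robust local efficiency bounds \cref{efficiency-ele-residual}--\cref{efficiency--jump}. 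No substantive differences.
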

 \begin{proof}
 Frist, we observe that
 \begin{equation}\label{norm-bounds}
  \tri u_{s}^{\Delta} \tri = \sup_{v \in DG^{0}(\cT_h, s), \tri v\tri =1} {\mathcal{A}(u_{s}^{\Delta}, v)} =
  \sup_{v \in DG^{0}(\cT_h, s), \tri v\tri =1} r (v).
  \end{equation}
  Applying integration by parts,  the definition of $\tilde \bsigma_{k-1}^{cg}$ and \cref{jump-id},  we have
\begin{equation}\label{effi:cg-a}
\begin{split}
r(v) &
\!=\! (f - \nabla \cdot  \tilde \bsigma_{k-1}^{cg}, v)_{\O}
\!=\! (f,  v)_{\O} \!+\!  \sum_{K \in \cT_{h}} \left((\tilde \bsigma_{k-1}^{cg},  \nabla v)_{K} 
-
<\tilde \bsigma_{k-1}^{cg} \cdot \bn_{K},  v >_{\partial K} \right)
\notag\\
&= (f,  v)_{\O} - \sum_{K \in \cT_{h}}(A \nabla u_{k}^{cg},  \nabla v)_{K} 
+
\sum_{F \in \cE \setminus \cE_{N}}< \{ A \nabla u_{k}^{cg} \cdot \bn_{F}\}_{w}^{F} , \jump{ v}) >_{F}\\
\\
&= (f + \nabla \cdot A \nabla u_{k}^{cg}, v)_{\O}
-
\sum_{F \in \cE_{I}}< (\jump{ A \nabla u_{k}^{cg} \cdot \bn_{F}} ,  \{v\}_{F}^{w}) >_{F} \notag\\
\end{split}
\end{equation}

By the triangle and Cauchy Schwartz inequalities, and \cref{weight1}, we have for any $F \in \cE_{I}$,
\begin{equation}\label{bound-part2a}
\begin{split}
&< \jump{ A \nabla u_{k}^{cg} \cdot \bn_{F}} ,  \{v\}_{F}^{w}>_{F}
\lesssim
		\|  \jump{ A \nabla u_{k}^{cg} \cdot \bn_{F}}\|_{F}  
		(\omega_{F}^+\| v_{K_F^-}\|_{F}
		+\omega_{F}^-\|v_{K_F^+}\|_{F} )\\
&\lesssim
\|  A_{F,max}^{-1/2}\jump{ A \nabla u_{k}^{cg} \cdot \bn_{F}}\|_{F}  
		(\| (A^{1/2}v)_{K_F^-}\|_{F}
		+\|(A^{1/2}v)_{K_F^+}\|_{F} ).
\end{split}
\end{equation}
Applying similar techniques as in the proof of \cref{lemma:boundforH1}
we can have the following bounds
\begin{equation}\label{bound-part2b}
(\| (A^{1/2}v)_{K_F^-}\|_{F}
		+\|(A^{1/2}v)_{K_F^+}\|_{F} )
\lesssim\sum_{z \in \cN_{F}}  \sum_{F \in \cE_{z}} A_{F}^{1/2} \| \jump{v}\|_{F},
\end{equation}
and
\begin{equation}\label{bound-part2bb}
\| A^{1/2} v\|_{K} \lesssim\sum_{z \in \cN_{K}}  \sum_{F \in \cE_{z}} A_{F}^{1/2} h_{F}^{1/2} \| \jump{v}\|_{F}
\end{equation}

Finally, applying \cref{norm-bounds}--\cref{bound-part2bb} and the Cauchy Schwartz inequality, and the fact that $\tri v \tri =1$, we have 
\begin{equation}\label{effi:cg-aa}
\begin{split}
&r(v)\le 
 \sqrt{\sum_{K \in \cT_{h}}h_{K}^{2}\|A^{-1/2}( f + \nabla  \cdot A \nabla u_{k}^{cg})\|_K^{2} }
 \sqrt{\sum_{K \in \cT_{h}}h_{K}^{-2} \| A^{1/2} v\|_K^{2}} \notag\\
 &+
\sqrt{\sum_{F \in \cE \setminus \cE_{N}} h_{F} A_{F,max}^{-1}\| \jump{ A \nabla u_{k}^{cg} \cdot \bn_{F}} \|_{F}^{2}}
 \sqrt{\sum_{F \in \cE \setminus \cE_{N}} h_{F}^{-1}(\| A^{1/2}v_{K_F^-}\|_{F}^{2}
		+\|A^{1/2}v_{K_F^+}\|_{F}^{2} )}
\notag\\
&\lesssim 
\sqrt{ {\sum_{K \in \cT_{h}}h_{K}^{2}\| A^{-1/2}(f + \nabla  \cdot A \nabla u_{k}^{cg})\|_K^{2} }
 +
 {\sum_{F \in \cE \setminus \cE_{N}}  \dfrac{h_{F}}{A_{F,max}}\| \jump{ A \nabla u_{k}^{cg} \cdot \bn_{F}} \|_{F}^{2}}} \; \tri v \tri.
\notag \\
\end{split}
\end{equation}

From the classical efficiency results, we also have
 \begin{equation}\label{efficiency-ele-residual}
  h_{K}\| A^{-1/2}(f + \nabla \cdot A \nabla u_{k}^{cg}) \|_K \lesssim \| A^{1/2} \nabla (u - u_{k}^{cg})\|_{\o_{K}} + \mbox{osc}(f)
 \end{equation}
 and
 \begin{equation}\label{efficiency--jump}
 \|A_{F,max}^{-1/2} \jump{ A \nabla u_{k}^{cg} \cdot \bn_{F}} \|_{F} \lesssim
  \| A^{1/2} \nabla (u - u_{k}^{cg})\|_{\o_{F}} + \mbox{osc}(f)
 \end{equation}
 where $\o_{K}$ and $\o_{F}$ are some local neighborhood of $K$ and $F$, respectively; and the involved constant does not depend on the jump of $A$.
Combining all yields \cref{global-efficiency}. This completes the proof of the lemma.
 \end{proof}

\begin{theorem}\label{lem:effi-cg-2}
Recall the local error indicator
\begin{equation} \label{cg-local-indicator}
	 \eta_{\sigma,K} = \|A^{-1/2} \hat \bsigma_h^{cg} + A^{1/2} \nabla u_k^{cg} \|_K 
	\end{equation}
	where $ \hat\bsigma_{h}^{cg}$ is defined in \cref{lem:sigma-hat-1}.
	Then we have the following global efficiency bound:
	\begin{equation}\label{global-effi-cg}
	 \eta_{\sigma} \le
	 C  \| A^{1/2} \nabla (u - u_{k}^{cg})\|_{\cT_{h}} + \mbox{osc}(f),
\end{equation}
where the constant is independent of the mesh and the jump of the coefficient $A(x)$.
\end{theorem}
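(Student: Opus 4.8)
The plan is to estimate $\eta_\sigma$ by splitting the recovered flux along its two-term construction from \cref{lem:sigma-hat-1}, namely $\hat\bsigma_h^{cg} = \tilde\bsigma_{k-1}^{cg} + \bsigma_s^{\Delta}$ with $\bsigma_s^{\Delta} = \mathbf{S}(u_s^{\Delta})$, and to apply the triangle inequality to reduce the claim to two separate bounds:
\[
\eta_{\sigma} \le \big\| A^{-1/2}\tilde\bsigma_{k-1}^{cg} + A^{1/2}\nabla u_k^{cg}\big\|_{\cT_h} + \big\| A^{-1/2}\bsigma_s^{\Delta}\big\|_{\cT_h}.
\]
The first term measures the distance between the weighted averaging flux and the numerical flux $-A\nabla u_k^{cg}$; the second measures the size of the correction flux, which has already been bounded in the $\tri\cdot\tri$ norm in \cref{lem:effi-cg-1}. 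I would bound each term robustly and then close the argument with \cref{lem:effi-cg-1}.

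For the first term, I would set $\btau := \tilde\bsigma_{k-1}^{cg} + A\nabla u_k^{cg}$. Since $A$ is piecewise constant, $\btau|_K \in RT(K,k-1)$, and by the interior moment condition \eqref{rt:1:dg} (with $s=k-1$) its moments against $\mathbb{P}_{k-2}(K)^d$ vanish; the scaled norm equivalence on $RT(K,k-1)$ restricted to vanishing interior moments then gives $\|A^{-1/2}\btau\|_K \lesssim \sum_{F\in\cE_K} h_F^{1/2} A_K^{-1/2}\|\btau\cdot\bn_F\|_F$. Next I would compute the facet traces using the $H(\mbox{div})$-conformity of $\tilde\bsigma_{k-1}^{cg}$, the facet identity \eqref{rt:1:a}, and $\omega_F^{+}+\omega_F^{-}=1$: on an interior facet $F$ one finds $\btau\cdot\bn_F = \omega_F^{+}\jump{A\nabla u_k^{cg}\cdot\bn_F}$ on $K_F^{-}$ and $\btau\cdot\bn_F = -\omega_F^{-}\jump{A\nabla u_k^{cg}\cdot\bn_F}$ on $K_F^{+}$, whereas $\btau\cdot\bn_F = 0$ on $\cE_D$ and $\btau\cdot\bn_F = g + A\nabla u_k^{cg}\cdot\bn_F$ (the Neumann residual) on $\cE_N$. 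The weight inequalities \eqref{weight1} convert the interior contribution into the diffusion-weighted edge jump, $A_K^{-1/2}\|\btau\cdot\bn_F\|_F \lesssim \|A_{F,max}^{-1/2}\jump{A\nabla u_k^{cg}\cdot\bn_F}\|_F$, with a constant independent of the jump of $A$. Summing over elements and facets using the finite overlap of the facet patches, and invoking the classical robust local efficiency bound \eqref{efficiency--jump} for the edge residual together with the analogous Neumann boundary residual estimate, would give $\|A^{-1/2}\btau\|_{\cT_h} \lesssim \|A^{1/2}\nabla(u-u_k^{cg})\|_{\cT_h} + \mbox{osc}(f)$.

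For the second term, I would use that $\bsigma_s^{\Delta} = \mathbf{S}(u_s^{\Delta}) \in \mathring{RT}(\cT_h,s)$ has vanishing interior moments by the definition of $\mathring{RT}(\cT_h,s)$, so the same scaled norm equivalence gives $\|A^{-1/2}\bsigma_s^{\Delta}\|_K \lesssim \sum_{F\in\cE_K} h_F^{1/2} A_K^{-1/2}\|\bsigma_s^{\Delta}\cdot\bn_F\|_F$. Substituting $\bsigma_s^{\Delta}\cdot\bn_F = A_F h_F^{-1}\jump{u_s^{\Delta}}$ from \eqref{dg-} and using $A_F = \min(\alpha_F^{+},\alpha_F^{-}) \le A_K$, hence $A_K^{-1/2} A_F \le A_F^{1/2}$, gives $\|A^{-1/2}\bsigma_s^{\Delta}\|_K \lesssim \sum_{F\in\cE_K} A_F^{1/2} h_F^{-1/2}\|\jump{u_s^{\Delta}}\|_F$. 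After squaring, summing over $K$ (each interior facet appearing in two elements), and recognizing the right-hand side as $\tri u_s^{\Delta}\tri^2$ from \eqref{tri-norm}, this reads $\|A^{-1/2}\bsigma_s^{\Delta}\|_{\cT_h} \lesssim \tri u_s^{\Delta}\tri$, which by \cref{lem:effi-cg-1} is $\lesssim \|A^{1/2}\nabla(u-u_k^{cg})\|_{\cT_h} + \mbox{osc}(f)$. Combining the two bounds and absorbing the generic constants yields \eqref{global-effi-cg}.

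The step I expect to be the main obstacle is the robustness of the first bound: one must verify that the specific weighted average used to define $\tilde\bsigma_{k-1}^{cg}$ produces a facet trace whose $A^{-1/2}$-weighted norm is dominated by the diffusion-weighted jump $\|A_{F,max}^{-1/2}\jump{A\nabla u_k^{cg}\cdot\bn_F}\|_F$ uniformly in the jump of $A$. This is exactly the role of the inequalities \eqref{weight1}, and it explains why the weights $\omega_F^{\pm}$ cannot be replaced by the plain arithmetic average. The remaining ingredients --- the scaled $RT$ norm equivalence, the finite-overlap summation, and the treatment of the Dirichlet and Neumann boundary facets --- are routine, and the element residual enters only indirectly, through \cref{lem:effi-cg-1}.
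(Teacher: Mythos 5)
Your proposal is correct and follows essentially the same route as the paper's proof: the triangle inequality splitting $\hat\bsigma_h^{cg}$ into $\tilde\bsigma_{k-1}^{cg}$ plus the correction, the scaled $RT$ norm equivalence reducing each piece to facet normal traces, the weight inequalities \eqref{weight1} to obtain the $A_{F,max}^{-1/2}$-weighted jump, and \cref{lem:effi-cg-1} together with \eqref{efficiency--jump} to close. Your explicit computation of the facet traces of $\tilde\bsigma_{k-1}^{cg}+A\nabla u_k^{cg}$ (including the Dirichlet and Neumann facets) merely spells out what the paper cites as \eqref{rt:1:aa} and ``equivalence of norms,'' so the two arguments coincide.
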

\begin{proof}
First, applying the triangle inequality,
\beq\label{tiangle-inequality}
\begin{split}
 \eta_{\sigma,K} &= \|A^{-1/2} (\hat \bsigma_h^{cg} - \tilde \bsigma_{k-1}^{cg}) \|_K 
 +
 \|A^{-1/2}\tilde \bsigma_{k-1}^{cg} + A^{1/2} \nabla u_k^{cg}  \|_K\\
 &= \|A^{-1/2} \bsigma_s^{\Delta}\|_K 
 +
 \|A^{-1/2}\tilde \bsigma_{k-1}^{cg} + A^{1/2} \nabla u_k^{cg}  \|_K.
 \end{split}
\eeq

To bound the second term, we first have by the equivalence of norms, \cref{weight1}, \cref{rt:1:aa} and \cref{efficiency--jump},
\beq\label{bound-part1}
\begin{split}
	&\| A^{-1/2}\tilde \bsigma_{k-1}^{cg} + A^{1/2} \nabla u_k^{cg}  \|_K 
	\lesssim
	\sum_{F \in \cE_{K}} h_{F}^{1/2}\| A_{K}^{-1/2}(\tilde \bsigma_{k-1}^{cg} 
	+ A_{K} \nabla u_k^{cg} ) \cdot \bn_{F}\|_{F}\\
	&
	\lesssim
	\sum_{F \in \cE_{K}}h_{F}^{1/2}\| A_{F,max}^{-1/2}\jump{A \nabla u_{k}^{cg}} \cdot \bn_{F}\|_{F}
	\lesssim
	 \sum_{F \in \cE_{K}}\| A^{1/2} \nabla (u - u_{k}^{cg})\|_{\o_{F}} + \mbox{osc}(f).
\end{split}
\eeq

It is then sufficient to prove
	\begin{equation}\label{global-effi-cg-a}
	\|  A^{-1/2}\bsigma_{s}^{\Delta} \|_{\cT_{h}} \le
	C   \| A^{1/2} \nabla (u - u_{k}^{cg})\|_{\cT_{h}} + \mbox{osc}(f).
\end{equation}

By the definition of $\bsigma_{s}^{\Delta}$,  \cref{dg-}, we have for each $K \in \cT_{h}$,
\beq
\begin{split}
	\|A^{-1/2} \bsigma_{s}^{\Delta}\|_{K} &\lesssim 
	 \sum_{F \in \cE_{K}} h_{F}^{1/2}\| A_{K}^{-1/2} \bsigma_{s}^{\Delta} \cdot \bn_{F}\|_{F} 
	\le  \sum_{F \in \cE_{K}} h_{F}^{1/2}\| A_{K}^{-1/2}\mathbf{S}({u_{s}^{\Delta}}) \cdot \bn_{F}\|_{F}\\
	&\le  \sum_{F \in \cE_{K}} h_{F}^{-1/2}\| A_{K}^{-1/2} A_{F} \jump{u_{s}^{\Delta}}\|_{F}
	\le  \sum_{F \in \cE_{K}} h_{F}^{-1/2}\| A_{F}^{1/2} \jump{u_{s}^{\Delta}}\|_{F}.
\end{split}
\eeq
This, combining with \cref{global-efficiency}, indicates that
\beq
\begin{split}
	\|A^{-1/2} \bsigma_{s}^{\Delta}\|_{\cT_{h}} & \lesssim \tri u_{s}^{\Delta}\tri \le C \| A^{1/2} \nabla (u - u_{k}^{cg})\|_{\cT_{h}} + \osc{(f)}.
\end{split}
\eeq
This completes the proof of the Theorem.

\end{proof}


%

\section{Numerical Experiments}\label{sec:7}
In this section, we report numerical results for the
Kellogg interface problem~\cite{bruce1974poisson} and the L-shaped
benchmark problem in two dimensions, discretized by conforming
finite element methods. In three dimensions, we consider a
regularized Fichera-corner problem with a vertex singularity.
In all cases, we test polynomial degrees $k=1,2,3$, and we set
$s=k-1$ for the recovered flux.

We employ a standard adaptive refinement loop of the form
\[
\mbox{Solve} \rightarrow \mbox{Estimate} \rightarrow \mbox{Mark}
\rightarrow \mbox{Refine}  \rightarrow.
\]
At each iteration, we compute the conforming finite element solution, evaluate the
local error indicators and mark elements for refinement.
For marking, we use D\"orfler's bulk criterion with parameter
$\theta\in(0,1)$.
We sort the elements by $\eta_K^2$ in descending order and mark the
smallest set $\mathcal M$ such that
\[
\sum_{K\in\mathcal M}\eta_K^2 \;\ge\;
\theta\sum_{K\in\mathcal T_h}\eta_K^2 .
\]
The marked elements are then refined, and the process is repeated
until the stopping criterion is met.

All numerical implementations were carried out in FEniCSx \cite{baratta2023dolfinx}, which provides an efficient and reproducible finite element programming framework for assembling variational forms and solving the resulting discrete systems.

\begin{example}[Kellogg's Problem]\label{ex1}
Let $\O=(-1,1)^2$ and
 \[
 u(r,\theta)=r^{\beta}\mu(\theta)
 \]
in the polar coordinates at the origin  with
$$
	\mu(\theta)=\left\{
	\begin{array}{lll}
	\cos((\pi/2-\sigma)\beta)\cdot\cos((\theta-\pi/2+\rho)\beta) &
	\mbox{if} & 0\leq \theta \leq \pi/2,\\[1ex]
	\cos(\rho\beta)\cdot\cos((\theta-\pi+\sigma)\beta) &
	\mbox{if} & \pi/2 \leq \theta \leq \pi,\\[1ex]
	\cos(\sigma\beta)\cdot\cos((\theta-\pi-\rho)\beta) &
	\mbox{if} & \pi\leq \theta \leq 3 \pi/2,\\[1ex]
	\cos((\pi/2-\rho)\beta)\cdot\cos((\theta-3\pi/2-\sigma)\beta) &
	\mbox{if} & 3\pi/2\leq \theta \leq 2\pi,
	\end{array}
\right.
$$
where $\sigma$ and $\rho$ are numbers.
The function $u(r,\theta)$ satisfies the diffusion equation in (\ref{pde}) with $A= \a I$,
$\Gamma_N=\emptyset$, $f=0$, and
 \[
	 \a =\left\{\begin{array}{ll}
 R & \quad\mbox{in }\, (0,1)^2\cup (-1,0)^2,\\[2mm]
 1 & \quad\mbox{in }\,\O\setminus ([0,1]^2\cup [-1, 0]^2).
 \end{array}\right.
 \]
In the test problem, we choose 
$\beta=0.1$ which is corresponding to
\[
	R\approx 161.4476387975881, \quad \rho= \dfrac{\pi}{4}, \quad \mbox{and} \quad
	\sigma \approx -14.92256510455152.
\]
Note that the solution $u(r,\theta)$ is only in
$H^{1+\beta-\epsilon}(\O)$ for some $\epsilon>0$ and, hence, it is
very singular for small $\beta$ at the origin. This suggests that
refinements should be centered mostly around the origin.
\end{example}

For the Kellogg problem, we take $\theta=0.3$ and stop the adaptive
loop once the relative error is below $1\%$.
The final adaptive meshes for $k=1,2,3$ are shown in
\cref{fig:kellogg-meshes}, and the convergence of the true error and
the global estimator $\eta$ is reported in \cref{fig:kellogg-error}.
In all cases, the refinement is concentrated near the origin, where
the solution is singular.
We obtain optimal convergence for each degree.
For reference, we include the expected rate $N^{-k/2}$ and
the computed  convergence rates for the true error.
The computed convergence rates for the true error are obtained using
\texttt{polyfit} in Matlab, which performs a least-squares fit of the error data
in a log--log scale to extract the asymptotic slope. 
For all cases, we observe a consistent computed convergence rate with the optimal convergence rate; the estimator exhibits the same optimal behavior

The average efficiency index takes the values $1.3726$, $3.6363$, and
$6.5877$ for $k=1,2,3$, respectively.

\begin{figure}[ht]
  \centering
  \begin{subfigure}[t]{0.32\textwidth}
    \centering
    \includegraphics[width=\linewidth]{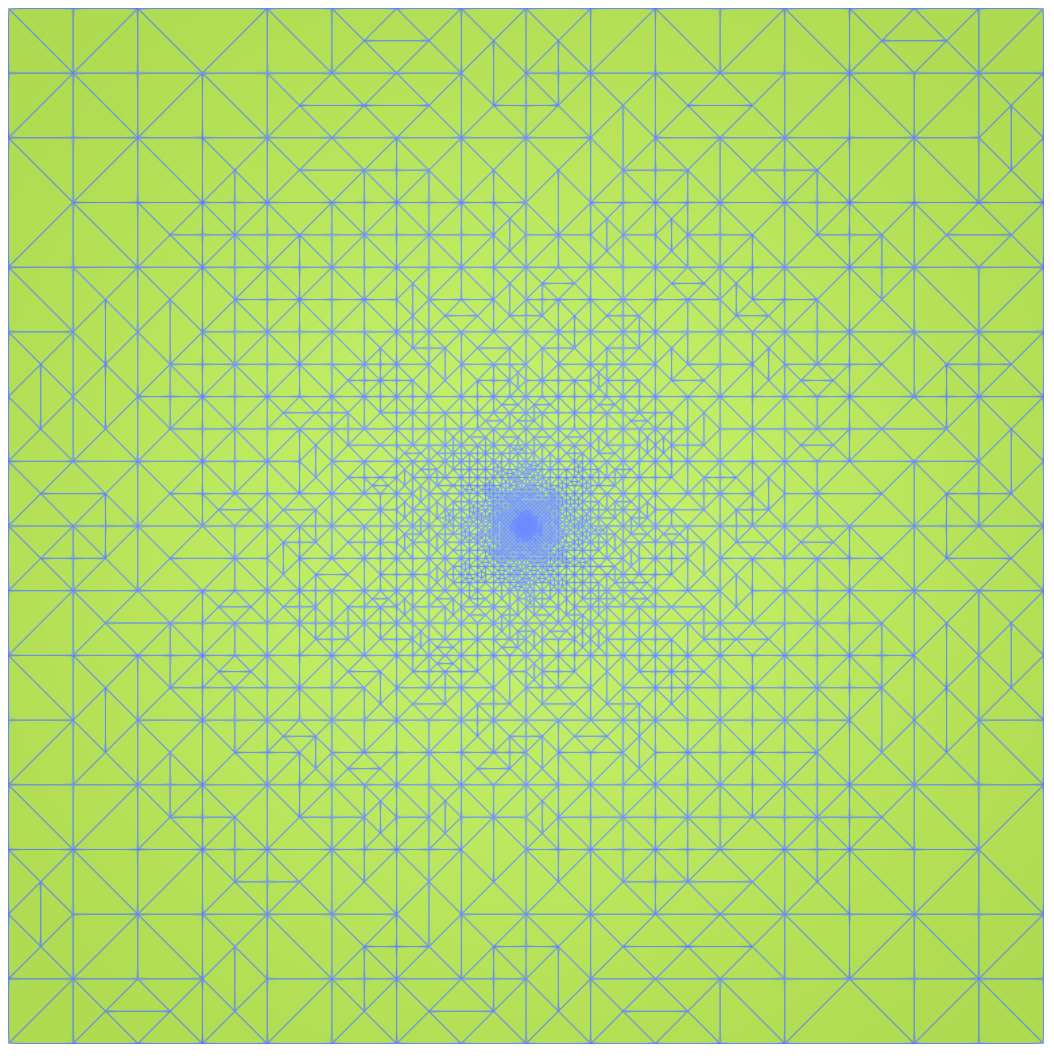}
  \end{subfigure}
  \hfill
  \begin{subfigure}[t]{0.32\textwidth}
    \centering
    \includegraphics[width=\linewidth]{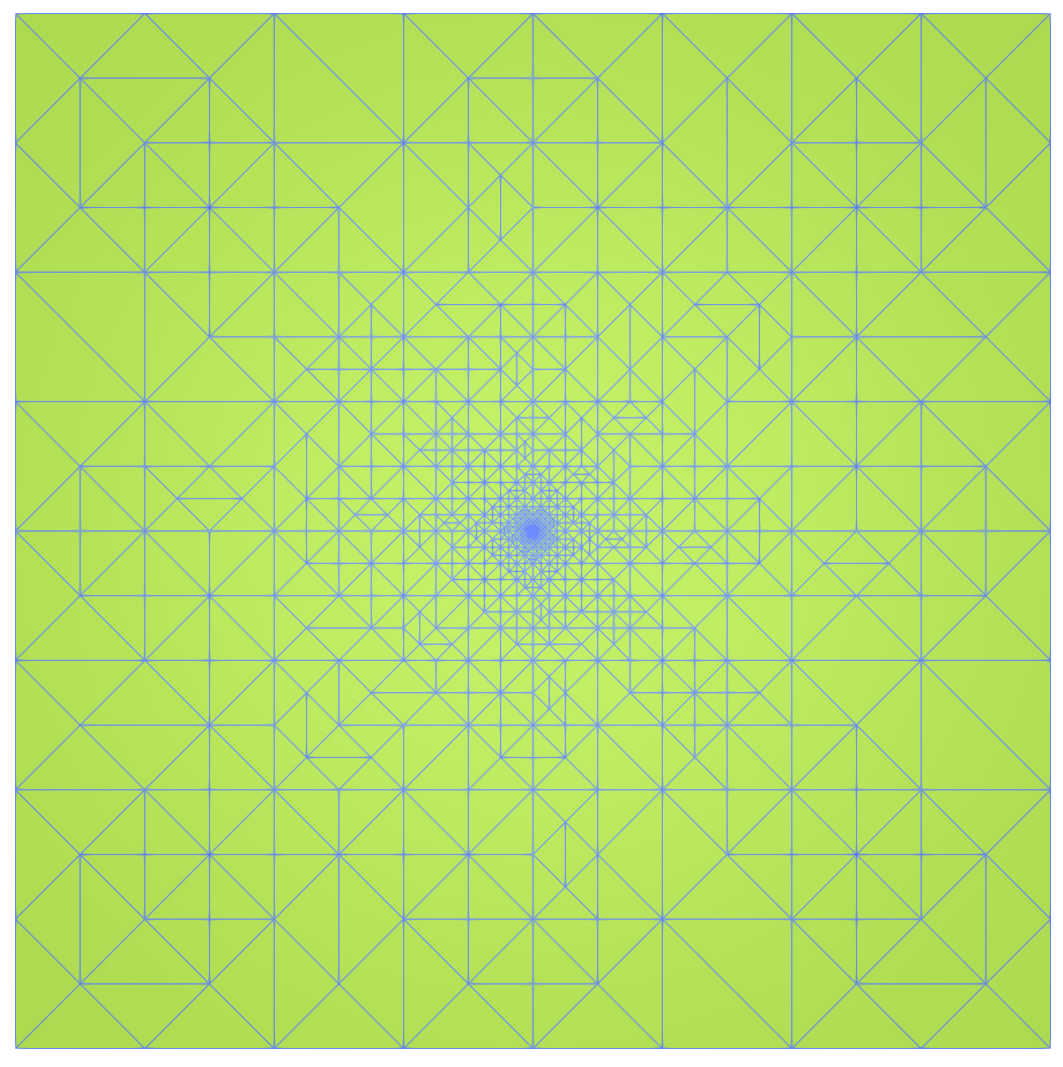}
  \end{subfigure}
    \begin{subfigure}[t]{0.32\textwidth}
    \centering
    \includegraphics[width=\linewidth]{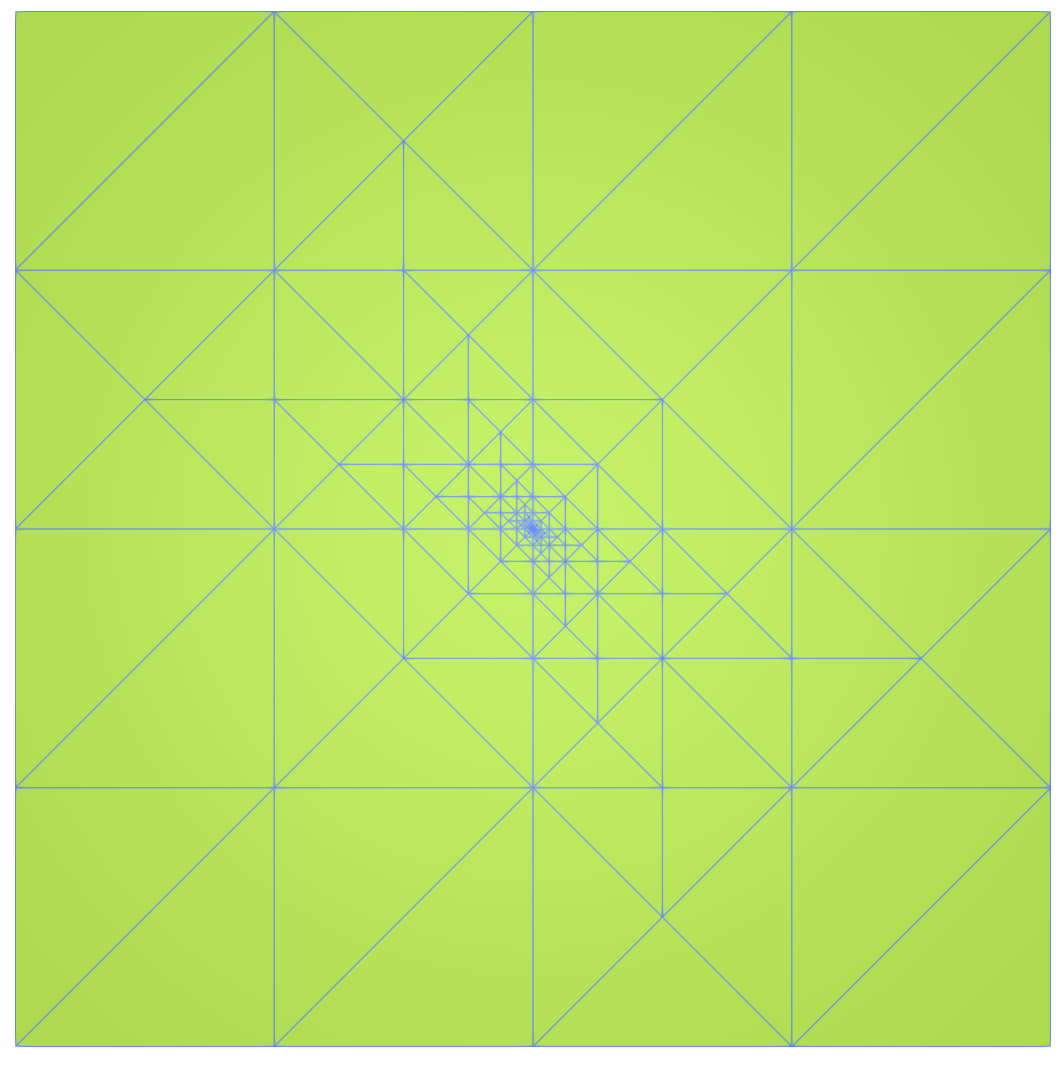}
  \end{subfigure}
  
  \caption{\cref{ex1} Final adaptive meshes for $k=1, 2, 3$}
  \label{fig:kellogg-meshes}
\end{figure}

\begin{figure}[htbp]
  \centering
  \begin{subfigure}[t]{0.32\textwidth}
    \centering
    \includegraphics[width=\linewidth]{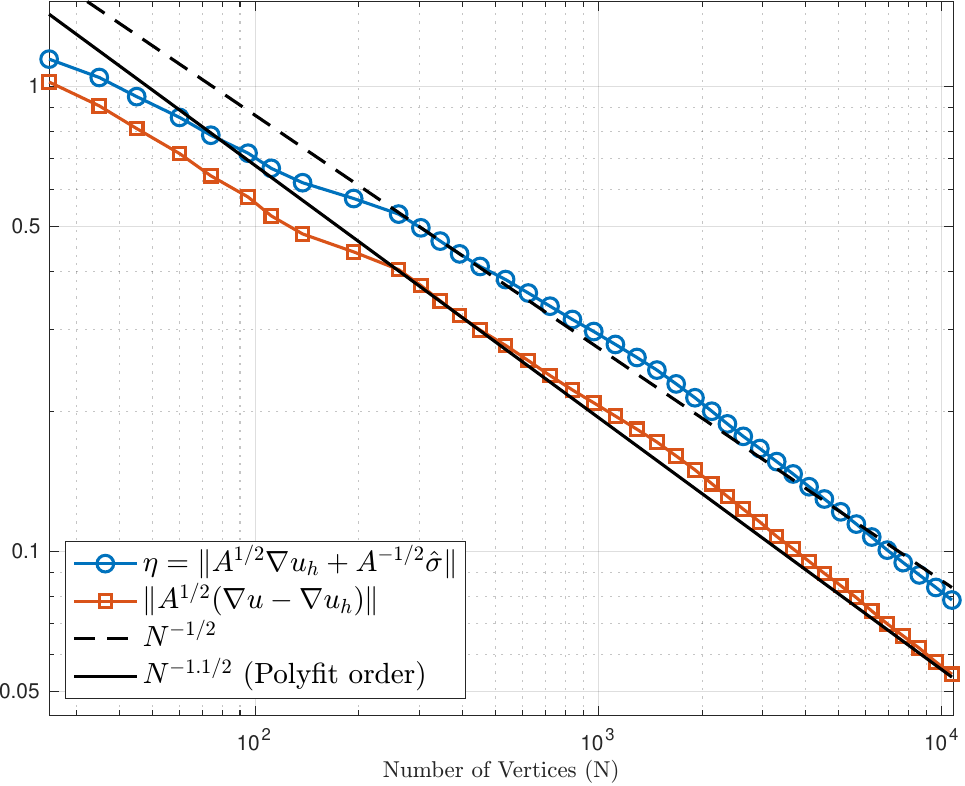}
  \end{subfigure}
  \hfill
  \begin{subfigure}[t]{0.32\textwidth}
    \centering
    \includegraphics[width=\linewidth]{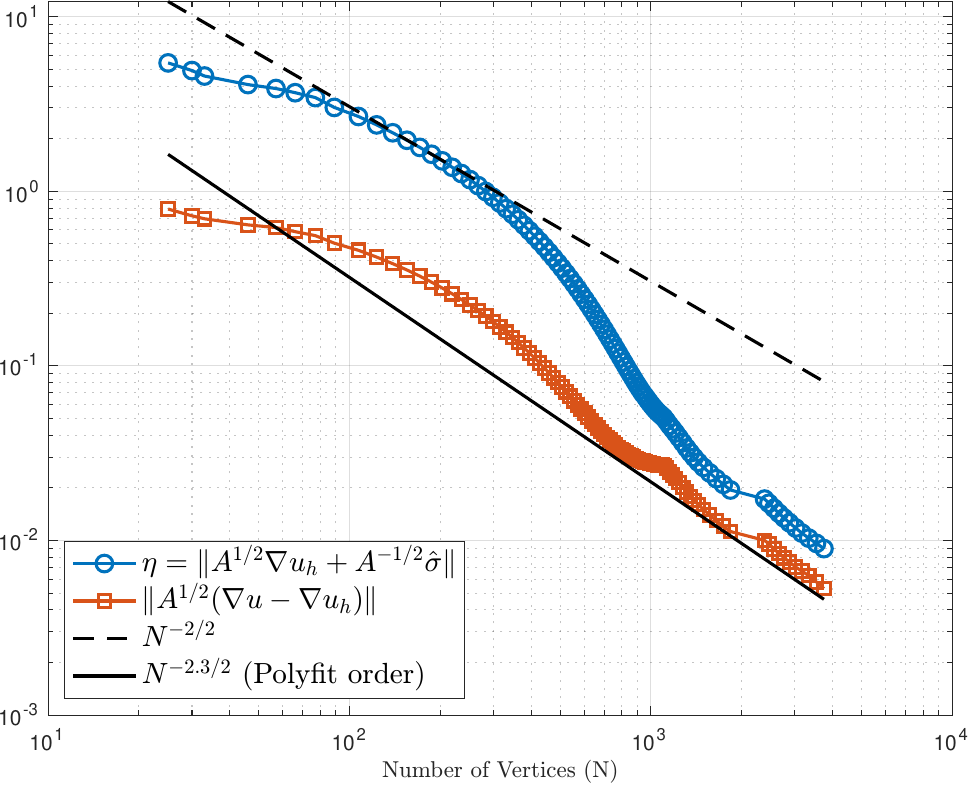}
  \end{subfigure}
    \begin{subfigure}[t]{0.32\textwidth}
    \centering
    \includegraphics[width=\linewidth]{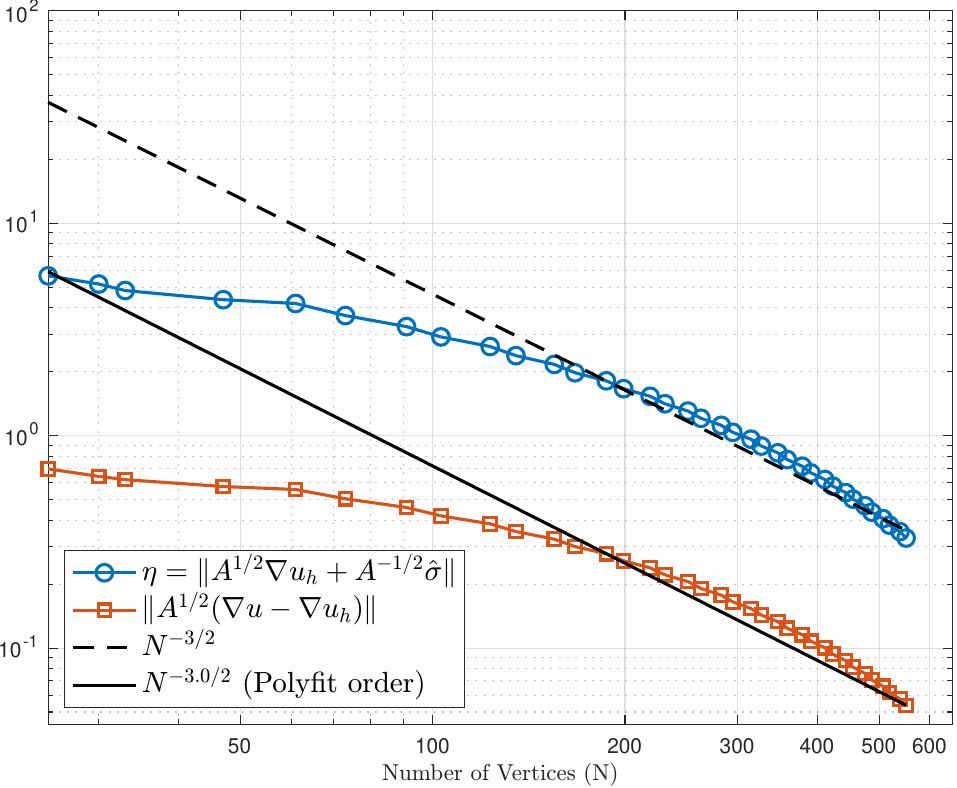}
  \end{subfigure}
  
  \caption{\cref{ex1} Adaptive error convergence results for $k=1, 2, 3$}
  \label{fig:kellogg-error}
\end{figure}


 \begin{example}[L-Shape Problem]\label{ex2}
In this example, we test the following problem: 
\[
	u(r,\theta) = r^{2/3} \sin(2 \theta /3), \quad \theta \in [0, \,3\pi/2]
\]
on the L-shaped domain $\O=(-1, \, 1)^2\setminus [0,\, 1]\times [-1,\,0]$. Note that this function satisfies (\ref{pde}) with $A(x)=I$ and $f=0$.
\end{example}
For the L-shaped problem, we set $\theta=0.2$ and use the same
stopping criterion as in \cref{ex1}, i.e., we stop once the relative
error falls below $1\%$.
The final adaptive meshes are shown in \cref{fig:2DLshape-meshes}.
For all tested degrees, refinement is concentrated near the re-entrant
corner, where the solution is singular, while the mesh remains coarse
elsewhere.
The error convergence is reported in \cref{fig:2DLshape-error}.
In this example, we observe superconvergence: for $k=2$, the observed
orders for both the true error and the estimator are close to $10$,
and for $k=3$ the observed orders are around $12$.

We emphasize that the estimator remains reliable and closely tracks
the true error throughout the adaptive process.
The efficiency index takes the values $1.12$, $1.79$, and $2.25$ for
$k=1,2,3$, respectively.


\begin{figure}[htbp]
  \centering
  \begin{subfigure}[t]{0.32\textwidth}
    \centering
    \includegraphics[width=\linewidth]{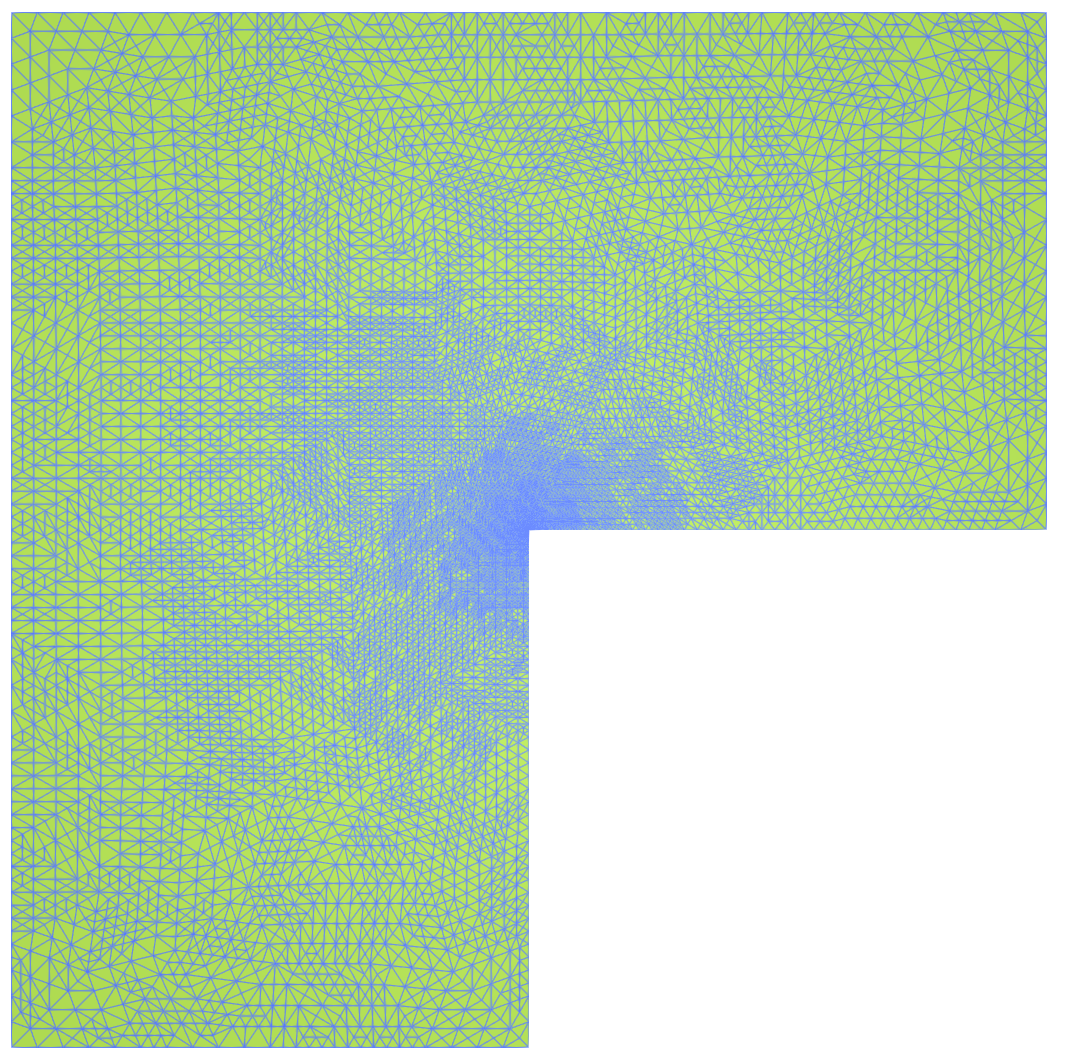}
  \end{subfigure}
  \hfill
  \begin{subfigure}[t]{0.32\textwidth}
    \centering
    \includegraphics[width=\linewidth]{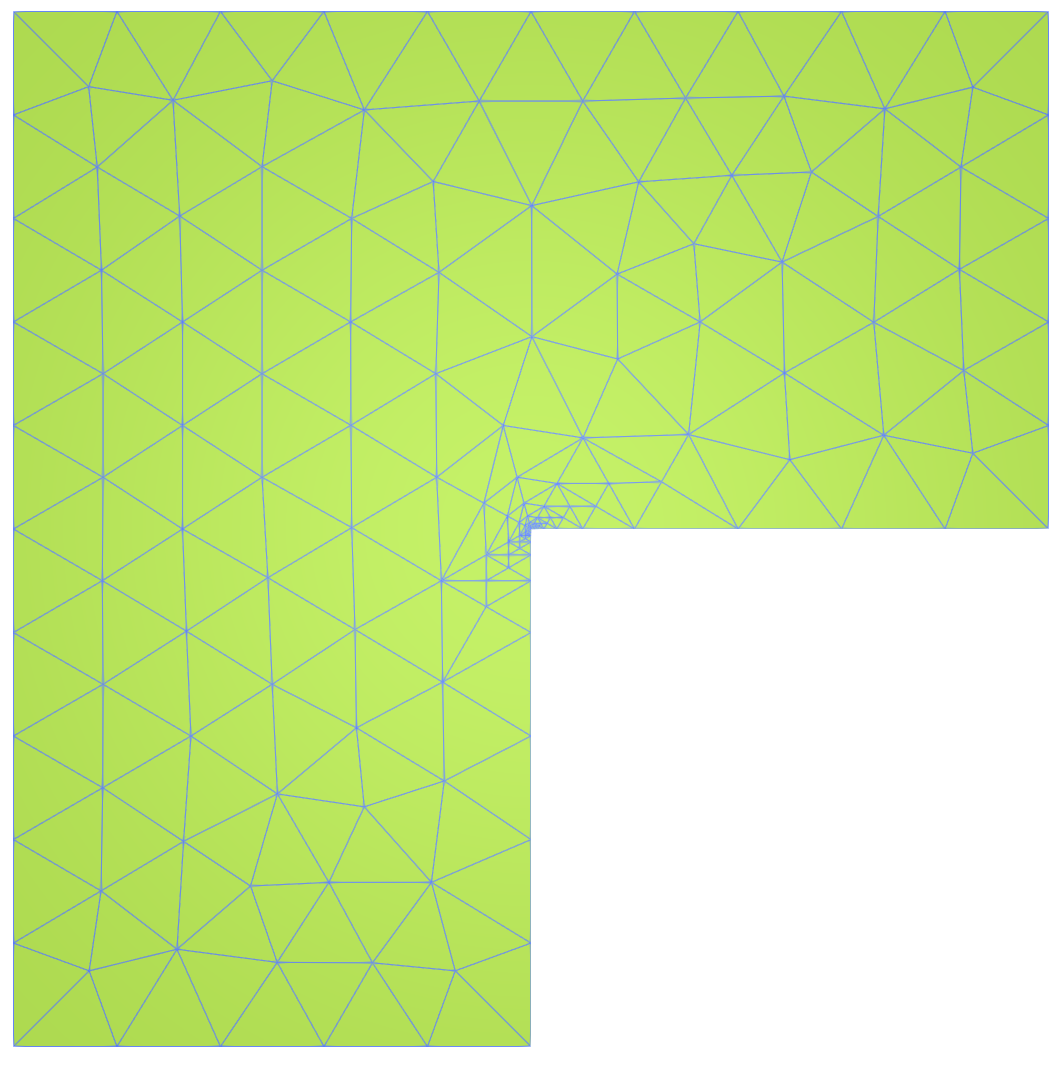}
  \end{subfigure}
    \begin{subfigure}[t]{0.32\textwidth}
    \centering
    \includegraphics[width=\linewidth]{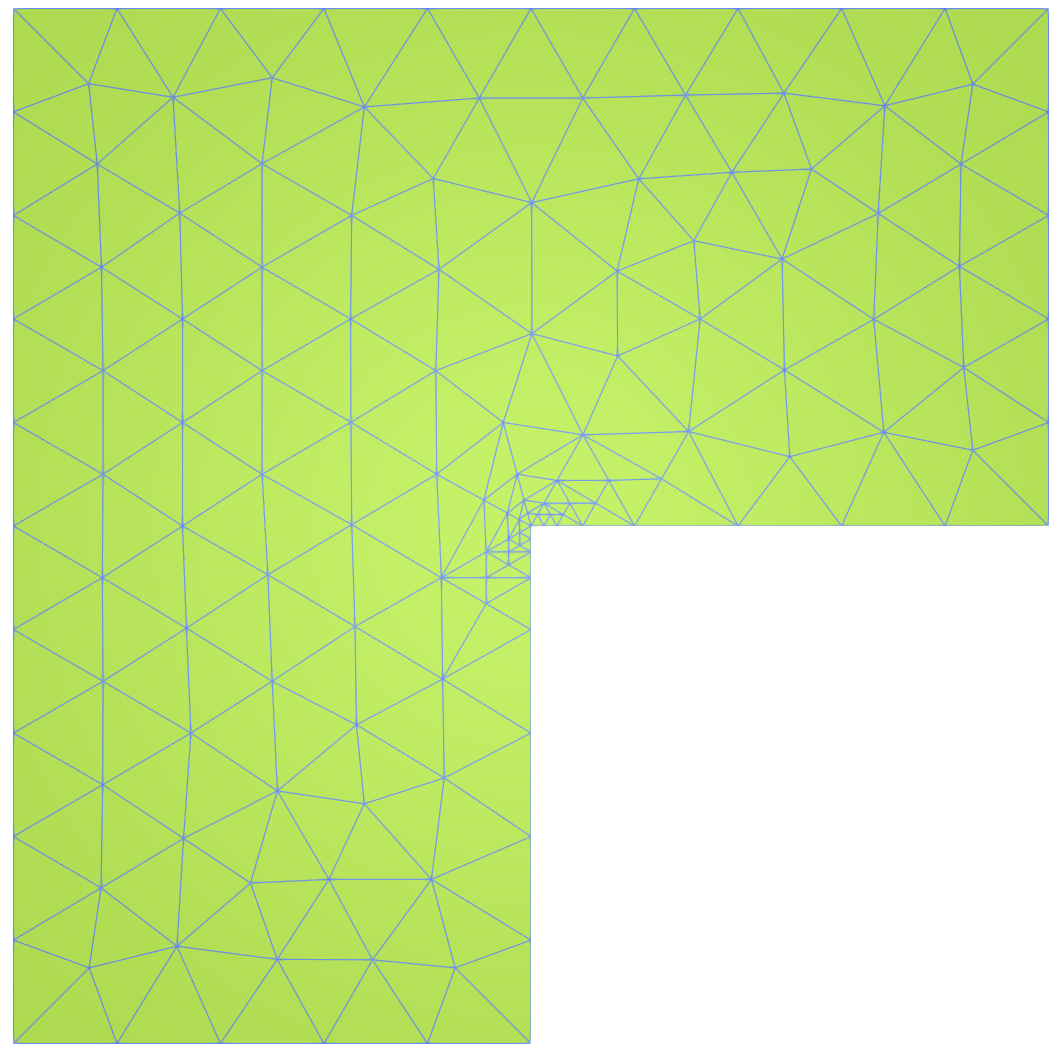}
  \end{subfigure}
  
  \caption{\cref{ex2} Final adaptive meshes for $k=1, 2, 3$}
  \label{fig:2DLshape-meshes}
\end{figure}

\begin{figure}[htbp]
  \centering
  \begin{subfigure}[t]{0.32\textwidth}
    \centering
    \includegraphics[width=\linewidth]{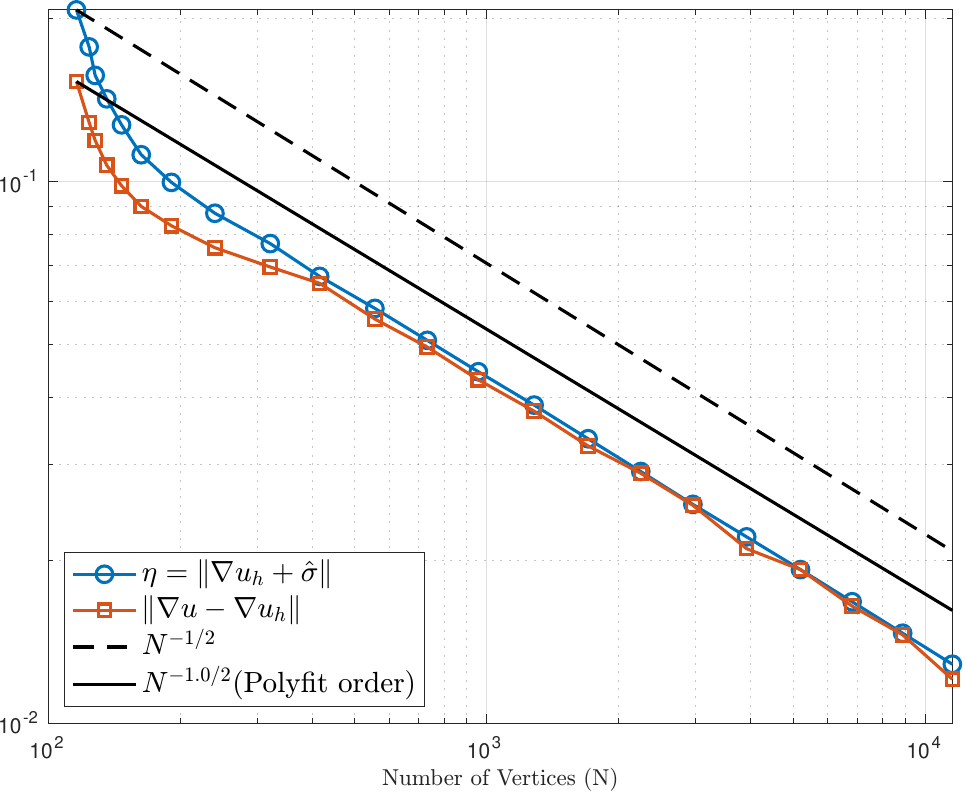}
  \end{subfigure}
  \hfill
  \begin{subfigure}[t]{0.32\textwidth}
    \centering
    \includegraphics[width=\linewidth]{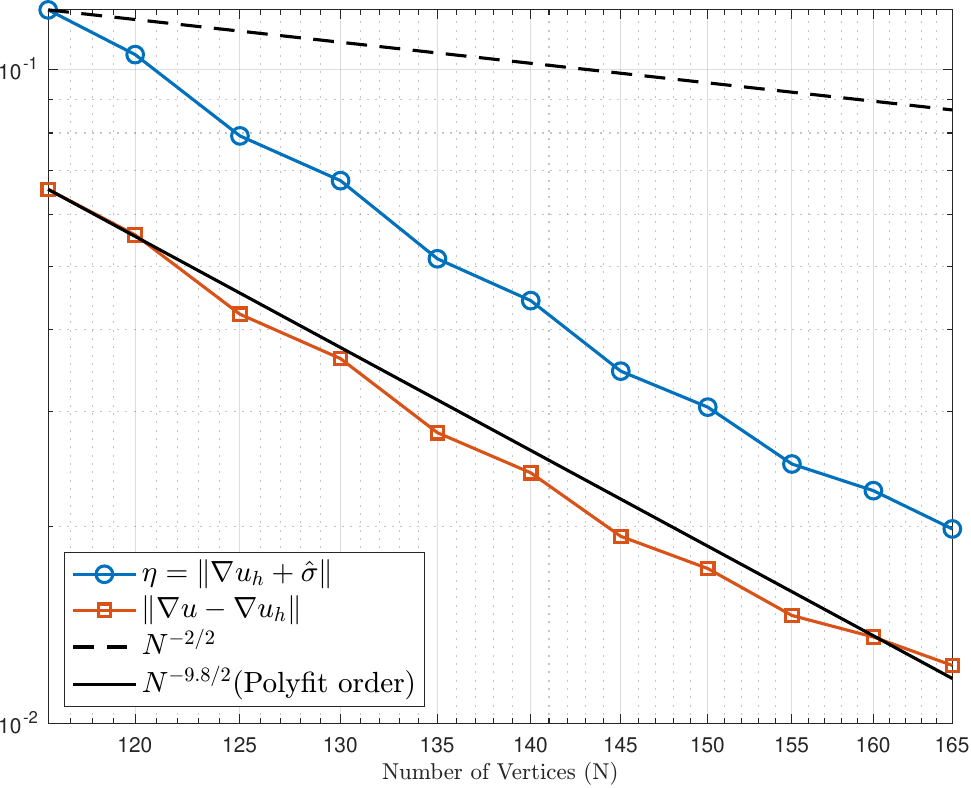}
  \end{subfigure}
    \begin{subfigure}[t]{0.32\textwidth}
    \centering
    \includegraphics[width=\linewidth]{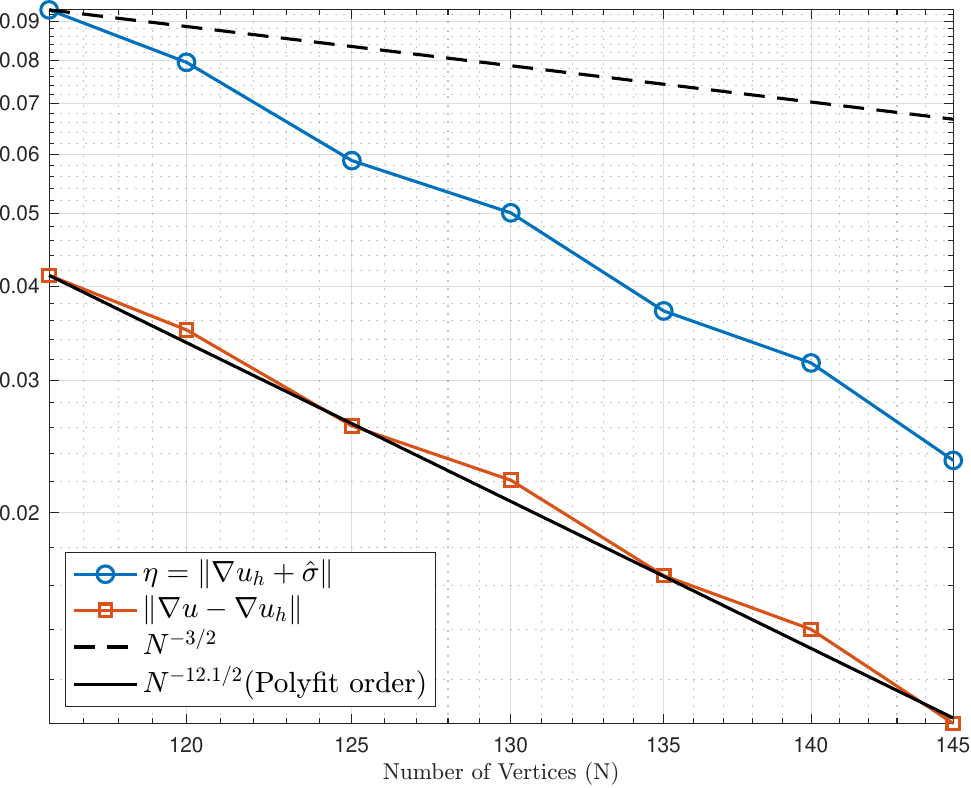}
  \end{subfigure}
  
  \caption{\cref{ex2} Adaptive error convergence results for $k=1, 2, 3$}
  \label{fig:2DLshape-error}
\end{figure}

\begin{example}[Regularized Fichera corner with vertex singularity]\label{ex3}
The Fichera corner problem \cite{mitchell2016performance} is the three-dimensional analogue of the
two-dimensional L-shaped domain.
We consider Poisson's equation on
\[
\Omega = (-1,1)^3 \setminus [0,1)^3,
\]
i.e., a cube with one octant removed.
To regularize the vertex singularity at the origin, we use the
exact solution
\begin{equation}\label{eq:fichera-f}
u(x,y,z)
=
\left(\sqrt{x^2 + y^2 + z^2 + \varepsilon}\right)^{q}, \quad q = 1/2,
\end{equation}
where $\varepsilon>0$ is a small regularization parameter and we choose $\varepsilon = 10^{-6}$. 
\end{example}

For the three-dimensional problem, we introduce a small
regularization parameter to avoid division-by-zero issues in the
right-hand side function~$f$.
Specifically, $f$ is defined consistently with the regularized exact
solution, as given in~\eqref{eq:fichera-f}.
We choose $\theta=0.15$ for $k=1,3$ and $\theta=0.3$ for $k=2$.
For $k=3$, the adaptive loop is terminated when the maximum number of
cells reaches $4500$, corresponding to a relative error
of $1.73\%$.
For $k=1$, the loop is stopped once the number of cells exceeds
$5\times10^{5}$, yielding a relative error of $3.06\%$.
For $k=2$, the refinement is terminated when the relative error drops
below $1\%$.

The final adaptive meshes are shown in \cref{fig:3DLshape-meshes}.
Consistent with the two-dimensional results, refinement is localized
near the vertex singularity.
The error convergence histories are reported in
\cref{fig:3DLshape-error}.
For $k=2$ and $k=3$, we observe slightly more than optimal convergence rates for both the
true error and the estimator.
For $k=1$, while the true error exhibits the expected optimal
convergence, the estimator converges at a slightly reduced rate,
indicating that the efficiency index is not fully uniform across all
iterations.
The slightly reduced convergence of the estimator may be
influenced by numerical effects on very fine meshes (e.g., 
solver tolerances) around the singularity, once the discretization error becomes
comparable to these errors.

\begin{figure}[htbp]
  \centering
  \begin{subfigure}[t]{0.32\textwidth}
    \centering
    \includegraphics[width=\linewidth]{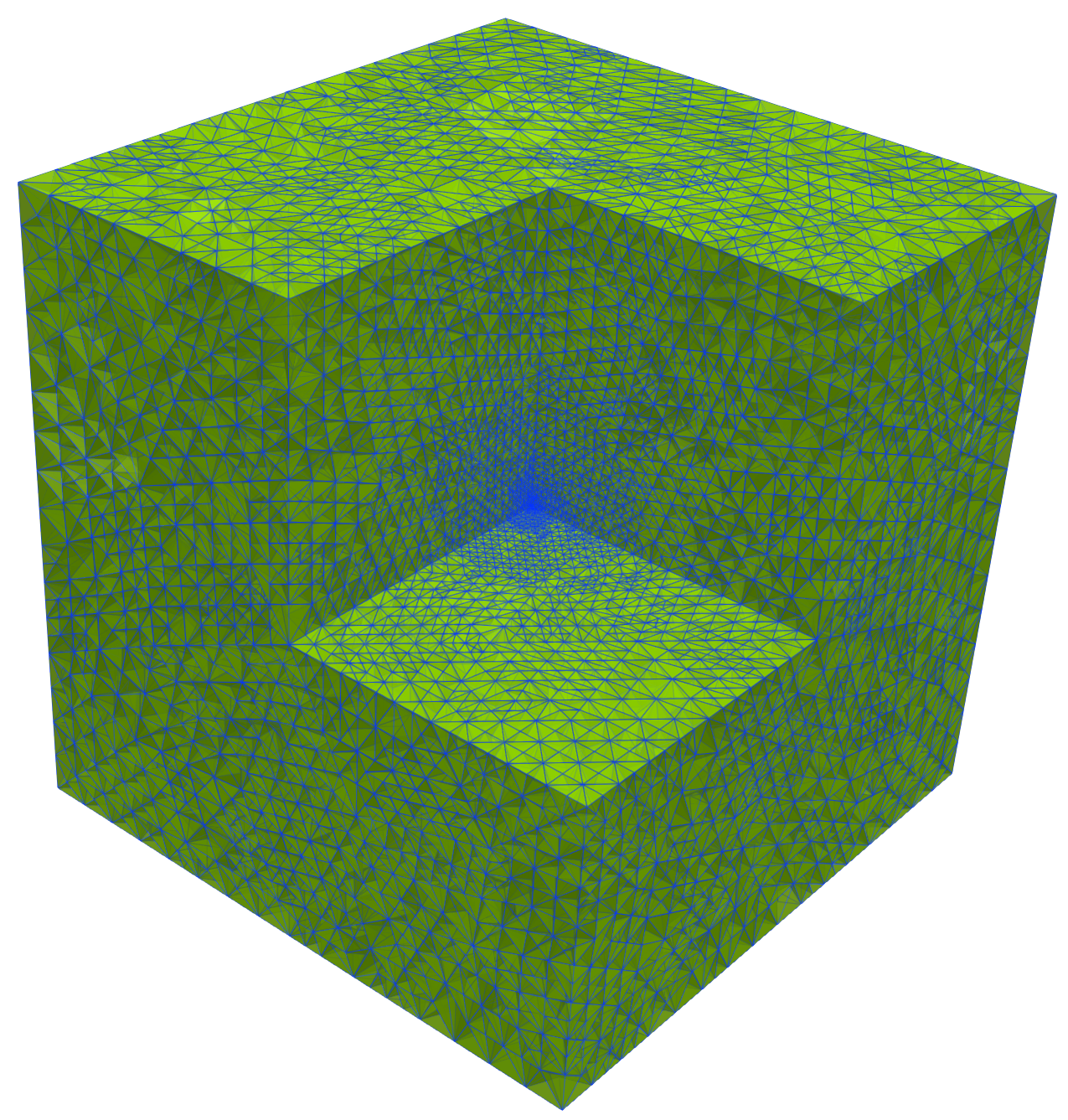}
  \end{subfigure}
  \hfill
  \begin{subfigure}[t]{0.32\textwidth}
    \centering
    \includegraphics[width=\linewidth]{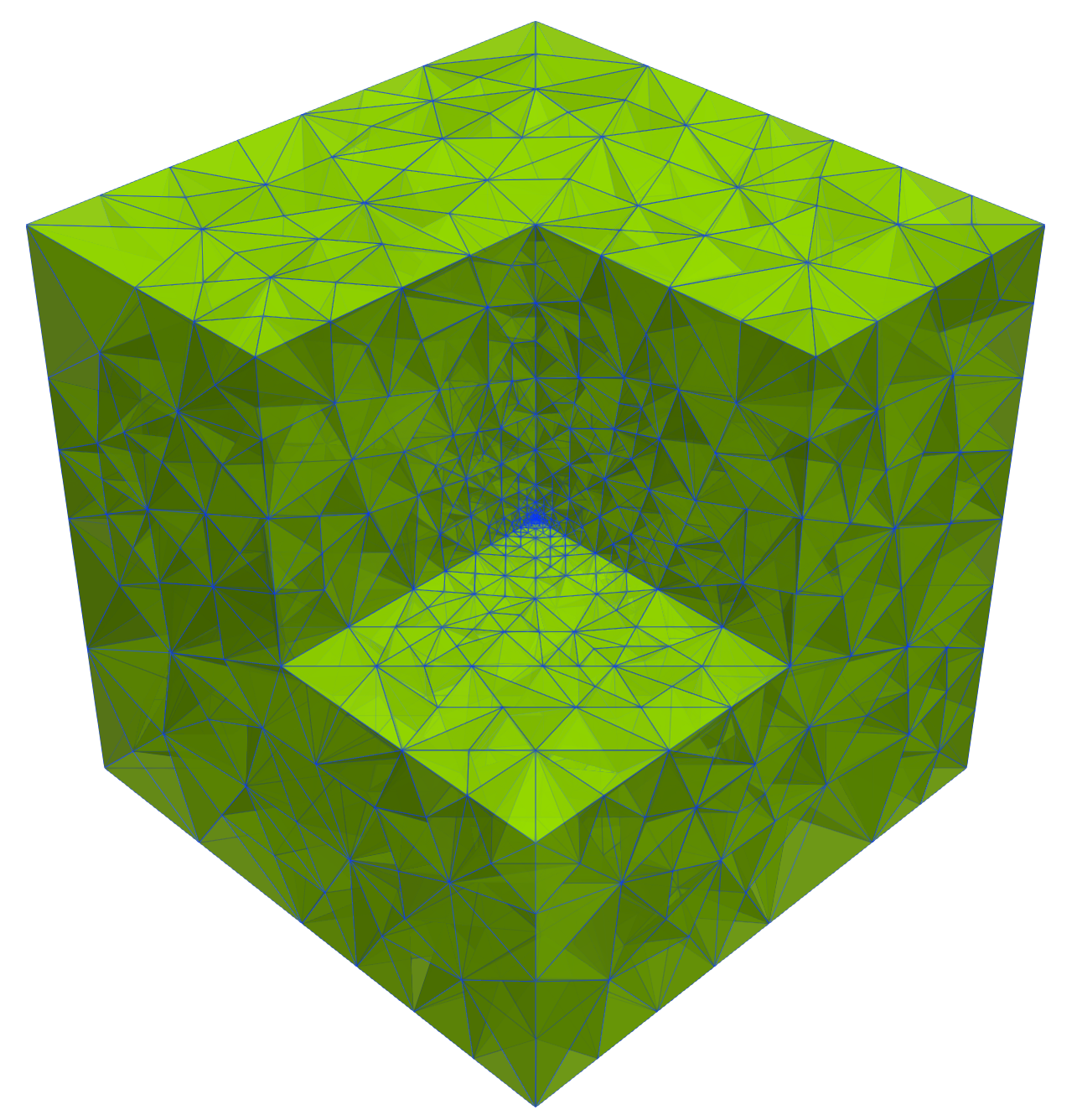}
  \end{subfigure}
    \begin{subfigure}[t]{0.32\textwidth}
    \centering
    \includegraphics[width=\linewidth]{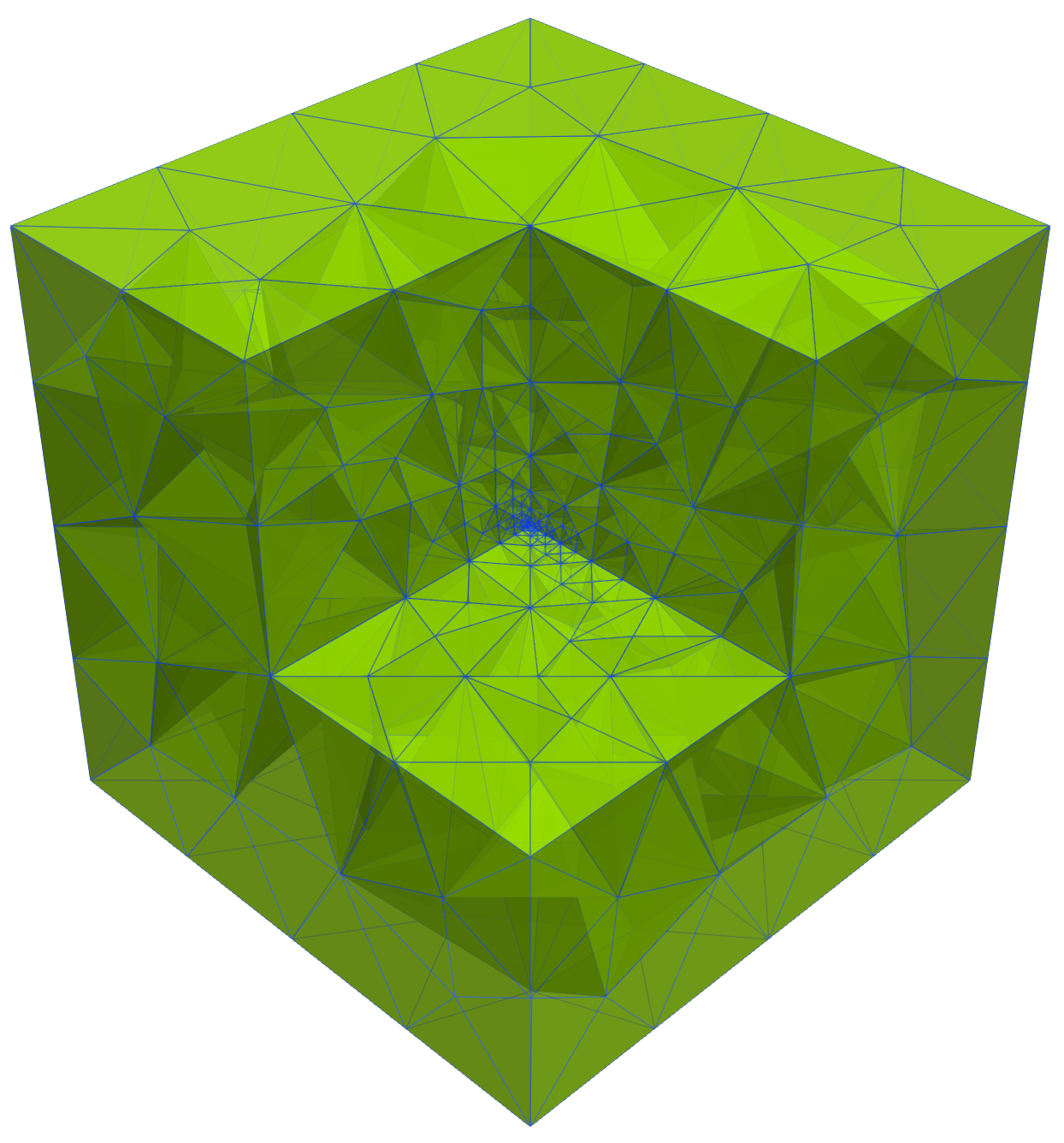}
  \end{subfigure}
  
  \caption{\cref{ex3} Final adaptive meshes for $k=1, 2, 3$}
  \label{fig:3DLshape-meshes}
\end{figure}

\begin{figure}[htbp]
  \centering
  \begin{subfigure}[t]{0.32\textwidth}
    \centering
    \includegraphics[width=\linewidth]{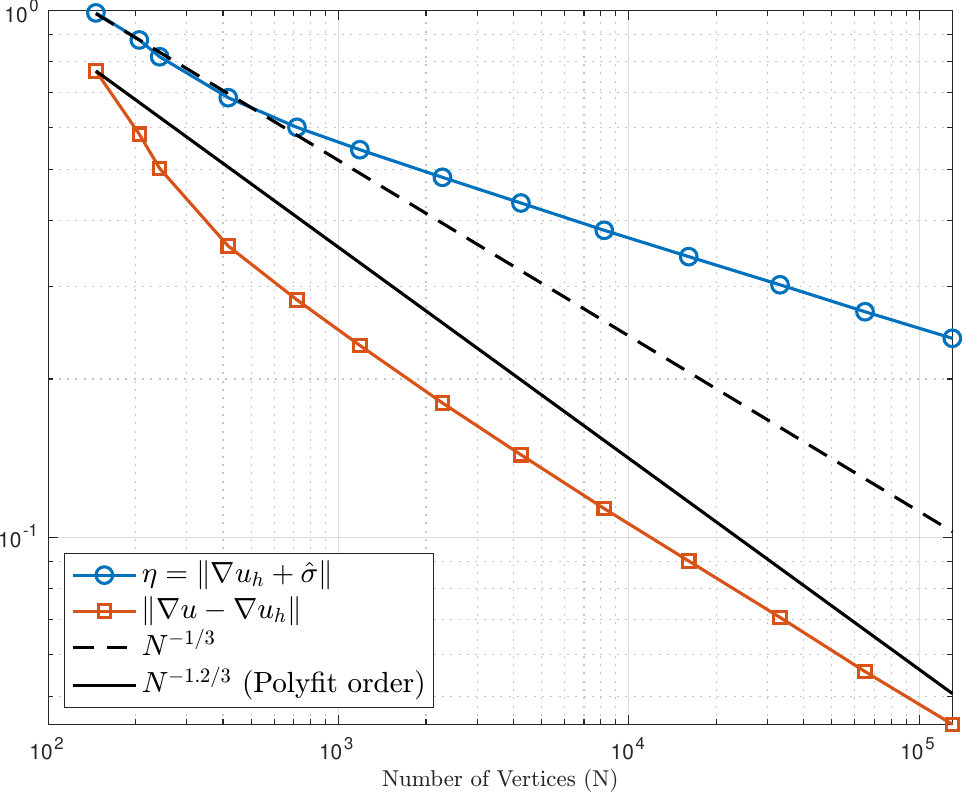}
  \end{subfigure}
  \hfill
  \begin{subfigure}[t]{0.32\textwidth}
    \centering
    \includegraphics[width=\linewidth]{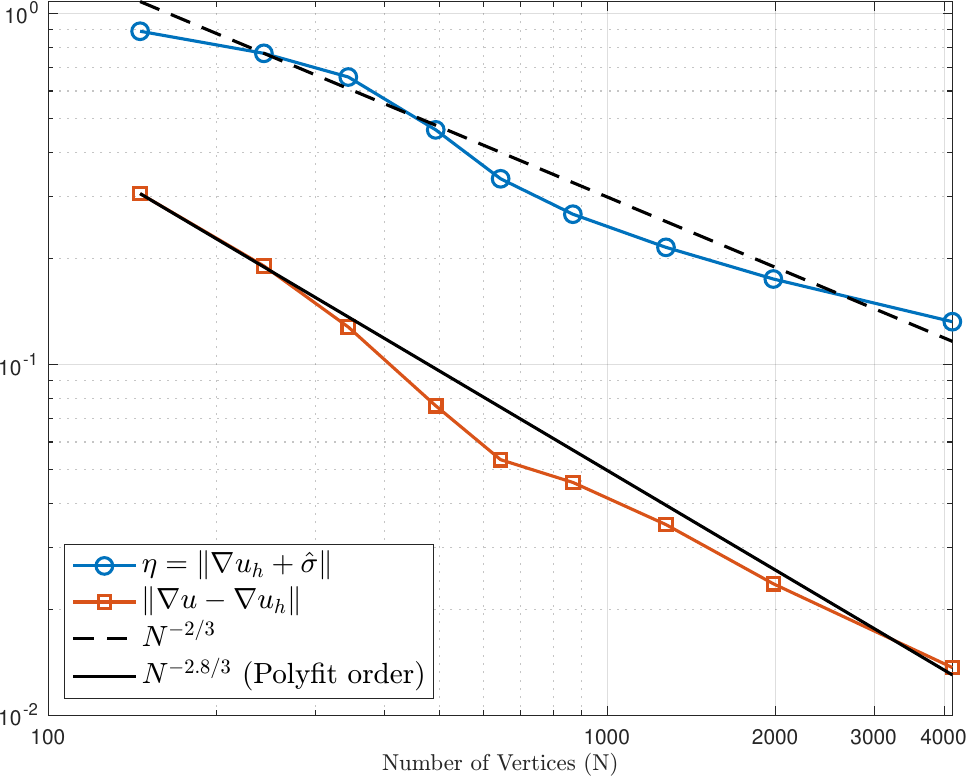}
  \end{subfigure}
    \begin{subfigure}[t]{0.32\textwidth}
    \centering
    \includegraphics[width=\linewidth]{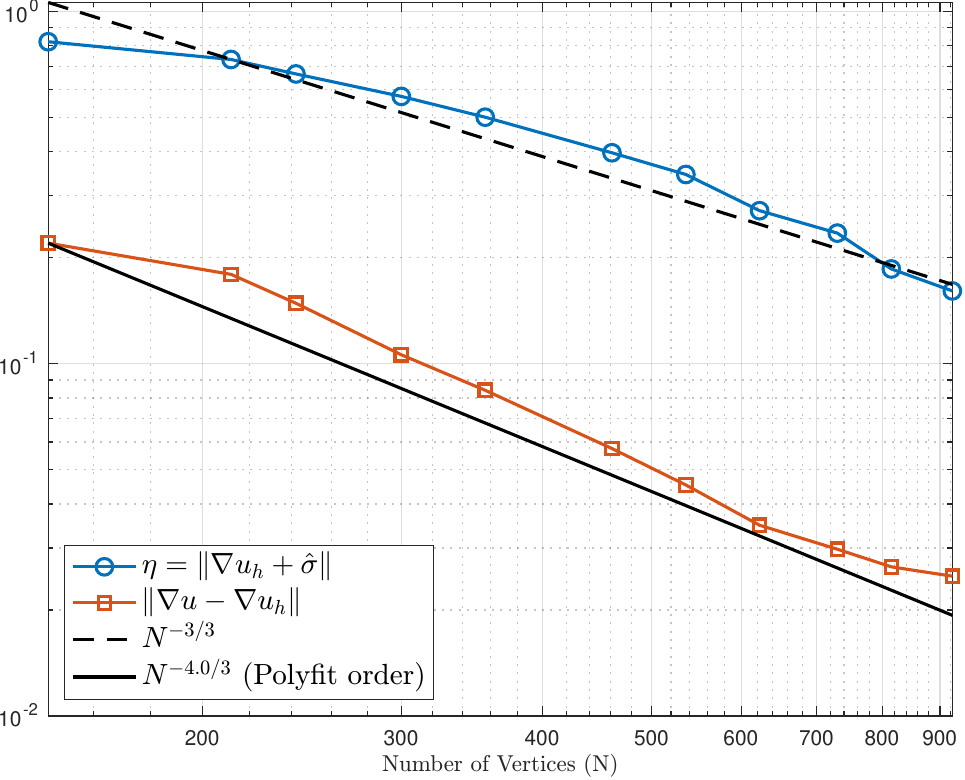}
  \end{subfigure}
  
  \caption{\cref{ex3} Adaptive error convergence results for $k=1, 2, 3$}
  \label{fig:3DLshape-error}
\end{figure}

The efficiency index takes the values $2.93$, $5.95$, and $6.09$ for
$k=1,2,3$, respectively.

\bibliographystyle{siamplain}
\bibliography{references}

\end{document}